\newcommand{\R}{\mathbb{R}}
\newcommand{\N}{\mathbb{N}}
\newcommand{\bq}{\begin{eqnarray*}}
\newcommand{\eq}{\end{eqnarray*}}
\newcommand{\erw}{\mathbb{E}} 
\def\blfootnote{\xdef\@thefnmark{}\@footnotetext}\makeatother
\theoremstyle{break}
\newtheorem{defn}{Definition}[section]
\newtheorem{theorem}[defn]{Theorem}
\newtheorem{proposition}[defn]{Proposition}
\newtheorem{kor}[defn]{Corollary}
\newtheorem{remark}{Remark}[section]
\newenvironment{proof}{\noindent{\textit{Proof:}}}{%
\unskip\nobreak\hfil\penalty50\hskip1em\null\nobreak
$\Box$
\parfillskip=\z@\finalhyphendemerits=0\endgraf\bigskip} 
\let\oldendexmp\endexmp
\def\endexmp{\unskip\nobreak\hfil\penalty50\hskip1em\null\nobreak\hfil%
$\blacksquare$\parfillskip=\z@\finalhyphendemerits=0\endgraf\oldendexmp}
\title{{\Large Distribution functions, extremal limits and optimal transport}\\
{\large Dedicated to the $125$th anniversary of J.G. van der Corput}
}
\author{\normalsize M.R. Iac\`{o}, R.F. Tichy and S. Thonhauser\thanks{The authors
are supported by the Austrian Science Fund (FWF) Project F5510 (part of the Special Research
Program (SFB) \textquotedblleft Quasi-Monte Carlo Methods: Theory and Applications\textquotedblright). The first author is also partially supported by the Austrian Science Fund (FWF): W1230, Doctoral Program \textquotedblleft Discrete Mathematics\textquotedblright 
}}
\date{}
\newcommand{\Addresses}{{
\bigskip
\footnotesize
\textsc{Institute of Analysis and Computational Number Theory (Math A), Graz University of Technology,
Steyrergasse 30/II, 8010 Graz, Austria}\par\nopagebreak
\textit{E-mail addresses}: \texttt{iaco@math.tugraz.at, tichy@tugraz.at, stefan.thonhauser@math.tugraz.at}
}}
\begin{document}
\maketitle
\blfootnote{{\bf Keywords}: distribution functions, optimal transport, linear assignment problem, copulas}
\begin{abstract}
Encouraged by the study of extremal limits for sums of the form
$$\lim_{N\to\infty}\frac{1 }{N}\sum_{n=1}^N c(x_n,y_n)$$
with uniformly distributed sequences $\{x_n\},\,\{y_n\}$ the following extremal problem is of interest
$$\max_{\gamma}\int_{[0,1]^2}c(x,y)\gamma(dx,dy),$$
for probability measures $\gamma$ on the unit square with uniform marginals, i.e., measures whose distribution function is a copula.\\
The aim of this article is to relate this problem to combinatorial optimization and to the theory of optimal transport. 
Using different characterizations of maximizing $\gamma$'s one can give alternative proofs
of some results from the field of uniform distribution theory and beyond that treat additional questions.
Finally, some applications to mathematical finance are addressed.
\end{abstract}
\section{Introduction and motivation}
In a series of papers J.G. van der Corput \cite{van_der_Corput_1935, van_der_Corput_1936} systematically investigated distribution functions of sequences of real numbers. Some of his main results are as follows:
\begin{itemize}
\item[(i)] Any sequence of real numbers has a distribution function.
\item[(ii)] Any everywhere dense sequence of real numbers can be rearranged in such a way that the new sequence has an arbitrarily given distribution function.
\end{itemize}
Clearly, in general a distribution function is not uniquely determined by the sequence. Furthermore, van der Corput established necessary and sufficient conditions for a set $\mathcal{M}$ of non-decreasing functions such that $\mathcal{M}$ is the set of distribution functions of some sequence of real numbers.\\

More recently, the study of distribution functions was extended to multivariate functions by the Slovak school of O. Strauch and his coworkers; see \cite{Strauch_school_III, Strauch_school_IV, Strauch_school_II, Strauch_school_I}. In particular, they studied properties of the set of distribution functions of sequences in $[0,1]^2$ and various extremal problems related to distribution functions. It should be noted that bi-variate distribution functions are well-known in financial mathematics for modeling dependencies in risk processes.\\

From Fialov{\'a} \& Strauch \cite[Thm 1]{FialovaStrauch} one knows that for uniformly distributed sequences $\{x_n\},\,\{y_n\}$ in $[0,1]$
and a continuous function $c:[0,1]^2\to\R$ one has
\begin{equation}\label{eq:limit}
\lim_{N\to\infty}\frac{1}{N}\sum_{n=1}^N c(x_n,y_n)=\int_{[0,1]^2} c(x,y)\gamma(dx,dy),
\end{equation}
where $\gamma$ is a probability measure on the unit square equipped with the $\sigma-$algebra of Borel sets. Such a measure exhibits
a bi-variate distribution function $C:[0,1]^2\to [0,1]$ which is generally called a \emph{copula}.
The aim of this article is to provide a connection between the problem of finding extremal limits in (\ref{eq:limit})
(or maximal and minimal bounds for such limits) by studying the optimization problem
\begin{equation}\label{problem}
\int_{[0,1]^2} c(x,y) \gamma(dx,dy)\,\mapsto\;\mbox{max}
\end{equation}
and the field of optimal transport. Indeed, we will show how this problem can be perfectly embedded in the general theory of optimal transport.\\
Motivated by the discussion on the limiting property \eqref{eq:limit} problem \eqref{problem} attracted some attention in the number theoretic community
and found its way on the collection of unsolved problems of \emph{Uniform Distribution Theory}
\footnote{Problem 1.29 in the open problem collection as of 28.\ November 2013 (http://www.boku.ac.at/MATH/udt/unsolvedproblems.pdf)}.
We will mention some existing results in that context below. Notice that in the uniform distribution literature, problem \eqref{problem} is
originally written as an optimization with respect to functions $C\colon [0,1]^2\rightarrow [0,1]$ satisfying the
following properties: for every $x,y \in [0,1]$
\begin{align*}
C(x,0) &= C(0,y) = 0,\\
C(x,1) &= x\;\text{and}\; C(1,y) = y,
\end{align*}
and for every $x_1,x_2,y_1,y_2 \in [0,1]$ with $x_2 \geq x_1$ and $y_2 \geq y_1$
\begin{equation*}
 C(x_2,y_2) - C(x_2, y_1) - C(x_1, y_2) + C(x_1,y_1) \geq 0.
\end{equation*}
Clearly, in this particular situation a copula is a bivariate distribution function on $[0,1]^2$ with standard uniform marginals.
From the above stated properties one additionally sees that a copula $C$ induces a (Borel-)probability measure $\gamma$ on $[0,1]^2$, via the formula
\begin{equation*}
\gamma([a,b]\times[c,d])=C(b,d)-C(b,c)-C(a,d)+C(a,c)\,.
\end{equation*}
A first result in the direction of extremal limits \eqref{eq:limit} is given in \cite{PS},
where the authors take $c(x,y)=|x-y|$ in order to find optimal upper and lower bounds on the average distance between consecutive
points of u.d. sequences. In particular, they proved $\lim_{N\to\infty}\frac{1}{N}\sum_{n=0}^{N-1}|x_{n+1} - x_n| = \frac{2(b-1)}{b^2}$ for the van der
Corput sequence $(\phi_b(n))_{n\geq 0}$ in base $b$.\\
By looking at the same problem, but in the formulation of \eqref{problem}, the authors in \cite{FialovaStrauch} could give an explicit
formula for the asymptotic distribution function of the sequence $(\phi_b(n), \phi_b(n+ 1))_{n\geq 0}$, that is of the copula $C(x,y)$.\\
The problem of finding the limit distribution of consecutive elements of the van der Corput sequence was also considered in \cite{AH},
but using a different approach based on ergodic properties of the sequence itself. However, this approach does not give an explicit
form of the copula $C(x,y)$. This last problem is not easy to handle with and, apart from the already mentioned papers \cite{FialovaStrauch, PS},
only \cite{FMS} is known, where the authors found an explicit asymptotic distribution function of the sequence
$(\phi_b(n), \phi_b(n+ 1), \phi_b(n+2))_{n\geq 0}$.\\
Problem \eqref{problem} has been recently studied in \cite{HoferIaco} in connection with a well-known problem in combinatorial optimization,
namely the linear assignment problem. By means of this tool, the authors give optimal upper and lower bounds for
integrals of two-dimensional, piecewise constant functions with respect to copulas and construct the copulas for which these
bounds are attained. More precisely, the copulas realizing these bounds are shuffles of $M$, where the permutation $\sigma$
is the one which solves the assignment problem.\\
From the uniform distribution point of view, this class of copulas represents a family of uniform distribution preserving
mappings (u.d.p.), i.e. maps $f$ generating u.d. sequences $(f(x_n))_{n\in\mathbb{N}}$ for every u.d. sequence $(x_n)_{n\in\mathbb{N}}$.
We will discuss more extensively the linear assignment problem and the algorithm that solves it in one of the following next sections.\\
\\
The problem \eqref{problem} is known as Monge-Kantorovich transport problem.
Its origin is the question of how to transport soil from one location to another at minimal costs.
More precisely, suppose these two locations are disjoint subsets $M$ and $F$ of the Euclidean plane $\mathbb{R}^2$
and that $c(x,y) \colon \mathbb{R}^2 \times \mathbb{R}^2 \rightarrow [0, \infty)$ is the cost of transporting one
shipment of soil from $x$ to $y$. For simplicity, we assume that there is no splitting of shipments. Thus, a
transport map is a function $T \colon M \rightarrow F$. The goal is to find the optimal transport map which minimizes the total costs 
\begin{equation*}
c(T) := \sum_{m \in M} c(m, T(m)),
\end{equation*}
under the restriction that all the soil needs to be moved.
We will give a precise description of the optimal transport problem and fundamental results in section \ref{transport}.\\ 
The Monge-Kantorovich problem has found a great variety of applications in pure and applied mathematics, such as Ricci curvature
\cite{ricci}, nonlinear partial differential equations \cite{diffequ}, gradient flows \cite{flow}, structure of cities \cite{city}, maximization of
profits \cite{profit}, leaf growth \cite{leaf} and so on.\\
\\
As a prominent field of application of the theory of copulas and the transport problem one needs to mention financial mathematics.
In the last decade copulas, or more precisely some parametrized families of copulas, became very popular in finance for
modelling dependence structures within groups of assets or more generally between different kinds of risk factors.
In the past the study of dependence structures was typically reduced to the determination of correlation coefficients.
However, correlation coefficients describe dependencies perfectly only in the situation of marginally normal distributed risk factors,
while distributions obtained from financial market data are typically not normal. A standard introduction to risk modeling
and particularly to practical aspects of copula modeling is the book by McNeil et al. \cite{Embrechtsbook}.\\
It turned out that a precise description of the dependency structure within a portfolio of risks is not feasible in practice.
That's why one is trying to determine some (one may call it worst case or robust) bounds on risk measures of portfolios.  
For this purpose it is possible to utilize variants of the optimal transport problem:
\emph{try to minimize a risk measure of a portfolio of several risks with respect to their distribution while preserving their marginals}.
Some recent publications studying problems from risk management are R{\"u}schendorf \cite{RueschenBook2013}, Puccetti \& R{\"u}schendorf \cite{PucRuesch2013}
or Bernard et al. \cite{BJW}. A paper dealing with model independent bounds on option prices using theory of optimal transport
is Beiglb{\"o}ck et al. \cite{Beigl2013}.

%
\section{Mathematical formulation}
In this section we give precise statements of the mathematical objects at hand. We refer to \cite{DrmotaTichy1997, DurSem15, Joe, Nelsen} for details. Our starting point is Sklar's Theorem
(see e.g. \cite[Theorem 3.2.2]{Nelsen}), a classical result about copulas which provides the theoretical foundation for application of copulas.
\begin{theorem}

Given a $d$-dimensional distribution function (d.f.) $H$ with marginals $F_1,\dots ,F_d$, there exists a $d$-copula $C$ such
that for all $(x_1, x_2, \dots , x_d ) \in \mathbb{R}^d$
\begin{equation}\label{sklar}
H(x_1, x_2, \dots , x_d ) = C (F_1(x_1), F_2(x_2),\dots , F_d (x_d))
\end{equation}
The copula $C$ is uniquely defined on $\prod_{j=1}^d Ran(F_j)$ and is therefore
unique if all the marginals are continuous (here $Ran(F_j)$ denotes the range of $F_j$).\\
Conversely, if $F_1, F_2,\dots, F_d$ are $d$ (1-dimensional) d.f.'s, then the
function $H$ defined through Eq. \eqref{sklar} is a $d$-dimensional d.f..
\end{theorem}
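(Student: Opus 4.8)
The plan is to prove the two directions separately, starting with the easier converse. For the converse, suppose $F_1,\dots,F_d$ are one-dimensional distribution functions and $C$ is a $d$-copula; I would verify directly that $H(x_1,\dots,x_d) := C(F_1(x_1),\dots,F_d(x_d))$ satisfies the defining properties of a $d$-dimensional distribution function. Since each $F_j$ is non-decreasing and right-continuous with $\lim_{x_j\to-\infty}F_j(x_j)=0$ and $\lim_{x_j\to+\infty}F_j(x_j)=1$, the boundary behaviour of $H$ follows from the corresponding boundary conditions of $C$ (namely that $C$ vanishes whenever some coordinate is $0$, and $C(1,\dots,1)=1$). The $d$-increasing (rectangle) inequality for $H$ is inherited from that of $C$, because composing with the non-decreasing maps $F_j$ sends a rectangle in $\R^d$ to a possibly degenerate rectangle in $[0,1]^d$, and degenerate rectangles contribute a non-negative $C$-volume equal to zero. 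Finally one reads off that the $j$-th marginal of $H$ is $C(1,\dots,F_j(x_j),\dots,1)=F_j(x_j)$.

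For the forward direction I would first treat the case of continuous marginals, which is transparent probabilistically. Let $(X_1,\dots,X_d)$ be a random vector whose joint distribution function is $H$; by the probability integral transform each $U_j:=F_j(X_j)$ is uniformly distributed on $[0,1]$ when $F_j$ is continuous. Defining $C$ to be the joint distribution function of $(U_1,\dots,U_d)$ then yields a genuine copula, and using $F_j(F_j^{-1}(u))=u$ together with right-continuity gives the identity $H(x_1,\dots,x_d)=C(F_1(x_1),\dots,F_d(x_d))$ on all of $\R^d$.

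The general case, where the $F_j$ may have jumps, is the technical heart of the argument. Here I would use the quasi-inverses $F_j^{(-1)}(u)=\inf\{x:\,F_j(x)\ge u\}$ and introduce the \emph{subcopula} $C'$ with domain $\prod_{j=1}^d \mathrm{Ran}(F_j)$ by setting $C'(F_1(x_1),\dots,F_d(x_d))=H(x_1,\dots,x_d)$. One must check this is well defined, i.e. that distinct points $x_j,x_j'$ with $F_j(x_j)=F_j(x_j')$ yield the same value of $H$; this follows because the rectangle inequality forces $H$ to be constant across slabs on which a margin carries no mass. Verifying that $C'$ is $d$-increasing with the correct uniform margins on its restricted domain already establishes existence on $\prod_j \mathrm{Ran}(F_j)$ and, since every value of $C$ there is dictated by $H$, its uniqueness on that set. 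The remaining and most delicate step is to extend $C'$ from the closure of its domain to a copula $C$ on the whole cube $[0,1]^d$ without destroying the $d$-increasing property; I would carry this out by multilinear interpolation on each product cell of the complement, in the spirit of the bivariate extension argument in Nelsen, and then check that the interpolant remains $d$-increasing and agrees with $C'$ on the range. I expect this extension, specifically ensuring that the rectangle inequality survives interpolation across the jumps of the $F_j$, to be the main obstacle, whereas the uniqueness claim and the marginal identities are comparatively routine.
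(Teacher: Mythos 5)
The first thing to note is that the paper contains no proof of this statement: it is Sklar's theorem, imported as a classical result with a citation to Nelsen's book, so there is no in-paper argument to compare your proposal against. Judged on its own terms, your outline follows the standard textbook route (Nelsen, Schweizer--Sklar): direct verification of the converse, the probability integral transform when all marginals are continuous, and in the general case the construction of a subcopula $C'$ on $\prod_{j} Ran(F_j)$ followed by an extension to a copula on $[0,1]^d$. The steps you do spell out are correct: well-definedness of $C'$ follows, as you say, from the bound $|H(b)-H(a)|\le \sum_{j}\bigl(F_j(b_j)-F_j(a_j)\bigr)$ that the rectangle inequality forces, and uniqueness on $\prod_j Ran(F_j)$ is immediate because the defining equation prescribes the values of $C$ there. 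Two caveats. First, in the converse direction you check boundary values, margins and $d$-increasingness but omit right-continuity of $H$; this is needed for $H$ to be a distribution function under the usual convention, and it follows from the Lipschitz continuity of copulas composed with the right-continuity of the $F_j$ --- worth one line. Second, and more substantively, the step you defer --- that multilinear interpolation of the subcopula on the closure of $\prod_j Ran(F_j)$ (augmented by $0$ and $1$) yields a $d$-increasing function --- is precisely the mathematical content of the theorem when marginals have jumps; in dimension $d=2$ this is Nelsen's extension lemma, and it does generalize to $d>2$ (the volume of any box under the interpolant is a convex combination, with non-negative weights, of grid-box volumes of $C'$), but as written your text asserts the plan rather than carrying out the check. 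So: correct strategy, complete in outline, with the key extension lemma left as a black box rather than proved.
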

Given a $d$-variate d.f.\ $F$, one can derive a copula $C$. Specifically,
when the marginals $F_i$ are continuous, $C$ can be obtained by means
of the formula
\begin{equation*}
C(u_1, u_2,\dots, u_d ) = F(F_1^{-1}(u_1), F_2^{-1}(u_2),\dots, F_d^{-1}(ud ))\ ,
\end{equation*}
where $F_i^{-1}=\inf\{t | F_i(t)\geq s\}$ is the pseudo-inverse of $F_i$.\\
Thus, copulas are essentially a way for transforming the r.v. $(X_1, X_2,\dots, X_d)$
into another r.v. $(U_1,U_2,\dots, U_d) = (F_1(X_1), F_2(X_2),\dots, F_d(X_d))$
having uniform margins on $[0,1]$ and preserving the dependence among the components.
As we have seen in Section 1, every copula $C$ induces a probability measure $\gamma$. Moreover,
there is a one-to-one correspondence between copulas and doubly stochastic measures. For every copula $C$,
the measure $\gamma$ is doubly stochastic in the sense that for every Borel set
$B\subset [0,1]$,  $\gamma([0, 1] \times B) = \gamma (B \times [0, 1])= \lambda(B)$ where $\lambda$ is the Lebesgue measure on
$[0,1]$. Conversely, for every doubly stochastic measure $\mu$, there exists a copula $C$ given by
$C(u, v)=\mu(([0, u])\times([0, v]))$. Clearly, a probability measure on $([0,1]^2,\mathcal{B}([0,1]^2))$
with uniform marginals is doubly stochastic. Therefore, we can translate some measure-theoretic concepts
and results into the language of copulas.

In particular, we are interested in the correspondence between copulas $C$
and measure-preserving transformations $f,g$ on the unit interval, via the formula
\begin{equation*}
C_{f,g} (u, v) = \lambda (f^{-1} [0, u])\cap \lambda (g^{-1} [0, v])\ .
\end{equation*}
We refer to \cite{ACS} for details and the study of related properties.\\
The following theorem shows how every copula can be bounded from above and below. The upper and lower bounds are called Fr\'echet-Hoeffding bounds.
\begin{theorem}
Suppose $F_1,\dots, F_d$ are marginal d.f.'s and $F$ is
any joint d.f. with those given marginals, then for all $\mathbf{x}\in \mathbb{R}^d$,
\begin{equation}\label{frechet}
 \left(\sum_{k=1}^d F_k(x_k) + 1 - d\right)^+\leq F(x) \leq \min (F_1 (x_1),\dots, F_d (x_d))\ .
\end{equation} 
\end{theorem}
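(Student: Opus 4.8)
The plan is to prove the Fr\'echet--Hoeffding bounds directly from the defining properties of a joint distribution function, reducing everything to elementary probabilistic inequalities. I would not invoke copulas at all; although Sklar's Theorem is available, the cleanest route works with the d.f.\ $F$ itself. Throughout, fix $\mathbf{x}=(x_1,\dots,x_d)\in\mathbb{R}^d$ and think of $F$ as the distribution function of a random vector $(X_1,\dots,X_d)$ with $F_k$ the d.f.\ of $X_k$, so that $F(\mathbf{x})=\mathbb{P}(X_1\le x_1,\dots,X_d\le x_d)$ and $F_k(x_k)=\mathbb{P}(X_k\le x_k)$.

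For the upper bound I would observe that the event $\{X_1\le x_1,\dots,X_d\le x_d\}$ is contained in each individual event $\{X_k\le x_k\}$, so monotonicity of probability gives $F(\mathbf{x})\le \mathbb{P}(X_k\le x_k)=F_k(x_k)$ for every $k$. Taking the minimum over $k$ yields $F(\mathbf{x})\le \min_k F_k(x_k)$. For the lower bound I would use the complementary events $A_k=\{X_k>x_k\}$ together with the union bound: since $F(\mathbf{x})=\mathbb{P}\bigl(\bigcap_k\{X_k\le x_k\}\bigr)=1-\mathbb{P}\bigl(\bigcup_k A_k\bigr)$ and $\mathbb{P}\bigl(\bigcup_k A_k\bigr)\le\sum_k\mathbb{P}(A_k)=\sum_k\bigl(1-F_k(x_k)\bigr)=d-\sum_k F_k(x_k)$, we obtain $F(\mathbf{x})\ge 1-\bigl(d-\sum_k F_k(x_k)\bigr)=\sum_k F_k(x_k)+1-d$. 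Since probabilities are nonnegative, $F(\mathbf{x})\ge 0$ as well, and combining these two lower estimates gives $F(\mathbf{x})\ge\bigl(\sum_k F_k(x_k)+1-d\bigr)^+$, which is the claimed left-hand inequality.

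There is no serious obstacle here; the only point requiring a little care is the measure-theoretic legitimacy of passing from $F$ to an underlying probability space, i.e.\ justifying that a joint d.f.\ $F$ with the stated marginals really is induced by some random vector. This is standard (Kolmogorov's construction, or simply reading the inequalities off the inclusion-exclusion structure of the measure $\gamma$ that $F$ defines), so one could alternatively phrase the entire argument purely in terms of the probability measure $\gamma$ on $\mathbb{R}^d$ determined by $F$, replacing "events" by Borel sets and the union bound by subadditivity of $\gamma$. Either formulation makes the two bounds follow from nothing more than monotonicity and finite subadditivity of a probability measure, so I expect the whole proof to be short.

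A final remark I would include: both bounds are themselves distribution functions (the upper bound corresponds to the comonotone coupling $M$, the lower bound to the countermonotone one in dimension $d=2$), which is consistent with the fact that they are attained and explains why they are the sharp envelope of the Fr\'echet class. This observation is not needed for the inequality \eqref{frechet} but situates it within the copula framework used in the rest of the paper.
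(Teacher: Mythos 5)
Your proof is correct, but note that the paper itself gives no proof of this theorem at all: it is quoted as a classical result with a pointer to Joe's book (Theorems 3.2 and 3.3 of \cite{Joe}), immediately followed by the remark that the right-hand side of \eqref{frechet} is always a copula while the left-hand side is one only for $d=2$. So your argument is not a variant of the paper's proof but a self-contained replacement for the citation, and it is the standard one: the upper bound from monotonicity of probability applied to $\bigcap_k\{X_k\le x_k\}\subseteq\{X_k\le x_k\}$, the lower bound from the union bound on the complements together with nonnegativity of $F$. This buys the reader an elementary, two-line verification using nothing beyond monotonicity and finite subadditivity of the measure induced by $F$, whereas the paper's route buys brevity and defers to a reference where sharpness of the bounds is also established. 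Your handling of the measure-theoretic point (working directly with the Borel measure $\gamma$ determined by $F$ instead of an explicit random vector) is fine. One caution about your closing remark: the claim that \emph{both} bounds are themselves distribution functions is false for $d\ge 3$ --- exactly as the paper states, the lower Fr\'echet bound fails to be a d.f.\ (equivalently, a copula) in dimensions three and higher, and correspondingly it is only pointwise sharp there, not attained by a single joint distribution. Since you flag that remark as inessential to the inequality, this does not affect the validity of your proof, but the remark should be corrected or restricted to $d=2$.
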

The right-hand side of \eqref{frechet} is always a copula, whereas the left-
hand side is a copula only $d = 2$, see \cite[Theorem 3.2 and 3.3]{Joe}.\\
Thus, the problem is to find bounds of the form
\begin{equation}\label{bounds}
  \int_{[0,1[^2} c(x,y) dC_{\min}(x,y) \leq  \int_{[0,1[^2} c(x,y) dC(x,y) \leq  \int_{[0,1[^2} c(x,y) dC_{\max}(x,y),
\end{equation}
where $C_{\min}, C_{\max}$ are copulas.\\
A particularly interesting subclass of copulas for our problems are so-called shuffles of $M$, see \cite[Section 3.2.3]{Nelsen}.
They represent a construction principle that generates new copulas by means of a
suitable rearrangement of the mass distribution of the upper Fr\'echet bound M.
\begin{defn}[Shuffles of $M$]\label{shuf}
Let $n \geq 1$, $s = (s_0, \ldots, s_n)$ be a partition of the unit interval with $0 = s_0 < s_1 < \ldots < s_n = 1$, $\pi$
be a permutation of $S_n = \{1,\ldots,n\}$ and $\omega \colon S_n \rightarrow \{-1, 1\}$. We define the partition
$t= (t_0, \ldots, t_n), ~0 = t_0 < t_1 < \ldots < t_n = 1$ such that each $[s_{i-1}, s_i[ \times [t_{\pi(i)-1}, t_{\pi(i)}
[$ is a square. A copula $C$ is called shuffle of $M$ with parameters $\{n, s, \pi, \omega\}$ if it is defined in
the following way: for all $i \in \{1, \ldots, n\}$ if $\omega(i) = 1$, then $C$ distributes a mass of $s_i - s_{i-1}$
uniformly spread along the diagonal of $[s_{i-1}, s_i[ \times [t_{\pi(i)-1}, t_{\pi(i)}[$ and if $\omega(i) = -1$
then $C$ distributes a mass of $s_i - s_{i-1}$ uniformly spread along the antidiagonal of $[s_{i-1}, s_i[ \times [t_{\pi(i)-1}, t_{\pi(i)}[$.
\end{defn}
Note that the two Fr\'echet-Hoeffding bounds $W, M$ are trivial shuffles of $M$ with parameters $\{1, (0,1), (1), -1\}$ and $\{1, (0,1), (1), 1\}$,
respectively. Furthermore it is well-known that every copula can be approximated arbitrarily close with respect to the supremum norm by a
shuffle of $M$; see e.g.\ \cite[Theorem 3.2.2]{Nelsen}.\\
In \cite[Theorem 4]{Durante_Sarkoci_Sempi} shuffles of $M$ are characterized in terms of measure preserving transformations $T$ of $[0,1]$
and the push-forward of the doubly stochastic measure induced by $M$. More precisely, the authors proved the following result.
\begin{theorem}
Let $\gamma_C$ denote the doubly stochastic measure induced by the copula $C$ and $\gamma_M$  be the doubly stochastic measure induced by $M$.
The following statements are equivalent:
\begin{itemize}
\item[(a)] a copula $C$ is a shuffle of $M$;
\item[(b)] there exists a piecewise continuous measure-preserving permutation such that $\gamma_C=S_T\ast\gamma_M $, where $S_T:[0,1]^2\rightarrow [0,1]^2$ is defined as $S_T(u,v)=(T(u),v)$ for every $(u,v)\in [0,1]^2$.
\end{itemize}
\end{theorem}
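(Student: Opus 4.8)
The plan is to prove the two implications separately, the core of both being a dictionary between the combinatorial parameters $\{n,s,\pi,\omega\}$ of a shuffle and the analytic data of the map $T$. The starting observation is that $\gamma_M$ is the law of $(U,U)$ for a uniform random variable $U$, so that $S_T\ast\gamma_M$ is the law of $(T(U),U)$ and is carried by the graph $\{(T(v),v):v\in[0,1]\}$; its second marginal is automatically uniform, while its first marginal is uniform precisely because $T$ is measure-preserving, so $S_T\ast\gamma_M$ is in any case a doubly stochastic measure, i.e.\ induced by a copula.

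For (a)$\Rightarrow$(b) I would read $T$ off directly from the geometry of the support of a shuffle $C$ with parameters $\{n,s,\pi,\omega\}$. Writing $j=\pi(i)$, the square $[s_{i-1},s_i[\times[t_{j-1},t_j[$ has equal sides $s_i-s_{i-1}=t_j-t_{j-1}$ by construction, so I define $T$ on $[t_{j-1},t_j[$ to be the unique increasing (if $\omega(i)=1$) or decreasing (if $\omega(i)=-1$) affine isometry onto $[s_{i-1},s_i[$, namely $T(v)=s_{i-1}+(v-t_{j-1})$ or $T(v)=s_i-(v-t_{j-1})$. Since $\pi$ is a permutation the images $[s_{i-1},s_i[$ tile $[0,1]$, so $T$ is a piecewise affine bijection of slope $\pm1$, hence a piecewise continuous measure-preserving permutation. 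On $[t_{j-1},t_j[$ the graph $u=T(v)$ is exactly the (anti)diagonal of the corresponding square, and the portion of $\gamma_M$ sitting over this $v$-interval carries mass $t_j-t_{j-1}=s_i-s_{i-1}$ spread uniformly, which is precisely the mass the shuffle places on that segment.

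For (b)$\Rightarrow$(a) the essential step, and the main obstacle, is to show that a piecewise continuous measure-preserving permutation of $[0,1]$ is forced to be a piecewise isometry of slope $\pm1$ (an \emph{interval exchange transformation} allowing orientation reversals). I would fix a partition $0=a_0<\dots<a_m=1$ on whose pieces $T$ is continuous; injectivity makes $T$ strictly monotone there, hence a homeomorphism onto an interval $I_k=T(]a_{k-1},a_k[)$. Bijectivity forces the $I_k$ to overlap only on a null set, since a common interior point would have two preimages, so the $I_k$ are essentially disjoint intervals tiling $[0,1]$. Testing measure-preservation against sub-intervals $]c,y[\subseteq I_k$ then gives $\lambda(T^{-1}(]c,y[))=y-c$, which pins $T$ down to a translation of slope $\pm1$ on each piece. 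With this lemma the reverse dictionary is immediate: take the $a_k$ as the $t$-partition, the tiling images $I_k$ (suitably relabelled) as the $s$-partition, let $\pi$ record which image corresponds to which piece, and set $\omega(i)=1$ or $-1$ according as $T$ is increasing or decreasing there.

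In both directions I would then close by the same disintegration remark: two probability measures on $[0,1]^2$ sharing the uniform second marginal and the same conditional law $\delta_{T(v)}$ of the first coordinate given the second must coincide; since both $S_T\ast\gamma_M$ and the shuffle measure $\gamma_C$ have uniform second marginal and are carried by the graph $u=T(v)$ with the same piecewise-isometric $T$, they agree. The only delicate points are boundary and null-set issues: the identifications of the partitions, the images and the permutation $\pi$ hold only up to the finitely many breakpoints, so one should read ``permutation'' and ``bijection'' modulo Lebesgue-null sets throughout, which is exactly what keeps the piecewise continuity from being disturbed at the finitely many cut points.
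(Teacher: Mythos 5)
Your proposal is correct, but note that there is nothing in the paper to compare it against: the paper does not prove this statement at all, it imports it verbatim as Theorem 4 of Durante, Sarkoci and Sempi \cite{Durante_Sarkoci_Sempi}. Your argument is, in effect, a self-contained reconstruction of the proof from that reference, and it identifies the right mechanisms in both directions: for (a)$\Rightarrow$(b), reading $T$ off the squares $[s_{i-1},s_i[\times[t_{\pi(i)-1},t_{\pi(i)}[$ as the piecewise affine isometry (slope $+1$ or $-1$ according to $\omega$), so that the law of $(T(U),U)$ reproduces exactly the mass the shuffle spreads along the diagonals and antidiagonals; and for (b)$\Rightarrow$(a), the structural lemma that a piecewise continuous measure-preserving bijection of $[0,1]$ is forced to be a piecewise isometry of slope $\pm1$, proved via strict monotonicity on each continuity piece, disjointness of the image intervals $I_k$, and the sub-interval length test $\lambda\bigl(T^{-1}(]c,y[)\bigr)=y-c$ which makes $(T|_k)^{-1}$ an isometry. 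Two points deserve one extra line each to be fully rigorous. First, in the length test you should state explicitly that $T^{-1}(]c,y[)$ differs from $(T|_k)^{-1}(]c,y[)$ at most by the finitely many breakpoints, precisely because every other piece maps into an interval disjoint from $I_k$; this is what legitimates equating the measure of the global preimage with that of the local one. Second, if (as you sensibly propose) ``permutation'' is read modulo Lebesgue-null sets, then ``injective $\Rightarrow$ strictly monotone on a piece'' needs a half-sentence more: a continuous, non-monotone function on an interval attains a whole interval of values at least twice, so essential injectivity would fail on a set of positive measure. Neither point affects the soundness of your plan.
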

On the other hand, the general problem is that of determining curves in the unit square  which can be considered as the support
of a copula. In \cite{MST} it has been proven that, for every copula obtained as a shuffle of $M$, there is a piecewise linear
function whose graph supports the probability mass. In this context, the following general result holds.
\begin{proposition}
Let $f : [0,1]\rightarrow [0,1]$ be a Borel measurable function. Then, there exists a copula $C$ whose associated
measure $\gamma$ has its mass concentrated on the graph of $f$ (with $\gamma(G (f )) = 1)$ if, and only if, the
function $f$ preserves the Lebesgue measure $\lambda$.
\end{proposition}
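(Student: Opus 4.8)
The plan is to prove both implications by passing through the correspondence between copulas and doubly stochastic measures recorded above, so that it suffices to produce (respectively, to analyze) a Borel probability measure $\gamma$ on $[0,1]^2$ with uniform marginals and $\gamma(G(f))=1$. A preliminary remark that I would make explicit first is that, since $f$ is Borel measurable, the graph $G(f)=\{(x,f(x)):x\in[0,1]\}$ is itself a Borel subset of $[0,1]^2$, being the preimage of $\{0\}$ under the Borel map $(x,y)\mapsto y-f(x)$. Hence $\gamma(G(f))$ is always well defined and the statement is meaningful.

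For the sufficiency direction ($\Leftarrow$), I would assume that $f$ preserves $\lambda$, define $\Phi\colon[0,1]\to[0,1]^2$ by $\Phi(x)=(x,f(x))$, and set $\gamma$ to be the push-forward $\gamma(A)=\lambda(\Phi^{-1}(A))$ for Borel $A\subseteq[0,1]^2$. Then $\gamma$ is a Borel probability measure with $\gamma(G(f))=\lambda(\{x:(x,f(x))\in G(f)\})=\lambda([0,1])=1$. The first marginal is automatic, because $\gamma(B\times[0,1])=\lambda(\{x\in B:f(x)\in[0,1]\})=\lambda(B)$; the second marginal is exactly the point at which the hypothesis enters, since $\gamma([0,1]\times B)=\lambda(f^{-1}(B))$, which equals $\lambda(B)$ precisely because $f$ is measure-preserving. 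Thus $\gamma$ is doubly stochastic, and the associated copula $C(u,v)=\gamma([0,u]\times[0,v])$ has all its mass on $G(f)$.

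For the necessity direction ($\Rightarrow$), I would take a copula $C$ with measure $\gamma$ satisfying $\gamma(G(f))=1$ and aim to recover $\lambda(f^{-1}(B))=\lambda(B)$ for every Borel $B\subseteq[0,1]$. The key observation is a slicing identity on the graph: the horizontal strip $[0,1]\times B$ and the vertical strip $f^{-1}(B)\times[0,1]$ meet $G(f)$ in exactly the same set, namely $\{(x,f(x)):f(x)\in B\}$. Since $\gamma(G(f))=1$, every Borel set has the same $\gamma$-measure as its intersection with $G(f)$, so
$$\gamma([0,1]\times B)=\gamma\big(([0,1]\times B)\cap G(f)\big)=\gamma\big((f^{-1}(B)\times[0,1])\cap G(f)\big)=\gamma(f^{-1}(B)\times[0,1]).$$
Invoking the two uniform marginals of $\gamma$ on the outer terms yields $\lambda(B)=\gamma([0,1]\times B)$ and $\gamma(f^{-1}(B)\times[0,1])=\lambda(f^{-1}(B))$, whence $\lambda(B)=\lambda(f^{-1}(B))$, i.e.\ $f$ preserves $\lambda$.

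The routine parts are the measure-theoretic bookkeeping: well-definedness of the push-forward and the automatic first marginal in the sufficiency direction. I expect the main subtlety to lie in the necessity direction, where one must justify cleanly that $\gamma(G(f))=1$ permits replacing an arbitrary Borel set by its trace on the graph, and then recognize the slicing identity that converts uniformity of the \emph{second} marginal into a statement about $f^{-1}$ controlled by the \emph{first} marginal. Establishing the Borel measurability of $G(f)$ at the outset is what makes all of these intersections legitimate.
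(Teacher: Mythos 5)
Your proof is correct and complete, but there is nothing in the paper to compare it against: the paper states this proposition without proof, importing it as a known result from the reference \cite{MST} cited immediately before the statement. Taken on its own terms, your argument is the natural self-contained one and every step checks out. For sufficiency, the push-forward $\gamma=\Phi_{\#}\lambda$ with $\Phi(x)=(x,f(x))$ concentrates mass on $G(f)$, its first marginal is automatically $\lambda$, and measure preservation of $f$ is precisely what makes the second marginal uniform, so the copula/doubly-stochastic-measure correspondence recorded in Section 2 of the paper yields the desired copula. For necessity, the slicing identity $([0,1]\times B)\cap G(f)=\bigl(f^{-1}(B)\times[0,1]\bigr)\cap G(f)$, combined with the fact that $\gamma(A)=\gamma(A\cap G(f))$ whenever $\gamma(G(f))=1$, converts uniformity of the second marginal into $\lambda(B)=\lambda(f^{-1}(B))$. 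Your preliminary observation that $G(f)$ is Borel (as the zero set of the Borel map $(x,y)\mapsto y-f(x)$) is exactly the point that makes these intersections and the evaluation $\gamma(G(f))$ legitimate, and you correctly note that $f^{-1}(B)$ is Borel so that the first-marginal identity applies. In short: the paper offers a citation, you offer a proof, and the proof is sound.
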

These results provide an interesting link between the theory of copulas and the theory of uniform distribution of sequences of points.
In particular, shuffles of $M$ can be considered as special uniform distribution preserving (u.d.p.) mappings.
Recently, different concepts of convergence for copulas have been used;  the question arising in this context
is about the closure of the class of shuffles of $M$ with respect to different notions of convergence and thus topologies.
This problem has been considered for instance in \cite{DuranteSanchez2012}.\\
\\
Now we consider the connection between copulas and uniformly distributed sequences of points in $[0,1[$.
For a detailed account on this concept the interested reader is referred to Drmota \& Tichy \cite{DrmotaTichy1997}.
\begin{defn}
A sequence $(x_n)_{n \in\mathbb{N}}$ of points in $[0,1[$ is called uniformly distributed (u.d.) if and only if
\begin{equation*}
 \lim_{N \rightarrow \infty} \frac{1}{N} \sum_{n = 1}^N \mathbf{1}_{[a, b[} (x_n) = b-a
\end{equation*}
for all intervals $[a, b[ \subseteq [0,1[$, where $\mathbf{1}_E$ denotes as usual the indicator function of the set $E$.
\end{defn}
We call $C$ the asymptotic distribution function (a.d.f.) of a point sequence $(x_n,y_n)_{n \in\mathbb{N}}$ in $[0,1[^2$ if
\begin{equation*}
 C(x,y) = \lim_{N \rightarrow \infty} \frac{1}{N}  \sum_{n = 1}^{N} \mathbf{1}_{[0,x[ \times [0,y[}(x_n,y_n),
\end{equation*}
holds in every point $(x,y)$ of continuity of $C$. In \cite{FialovaStrauch} Fialov\'a and Strauch consider
\begin{equation*}
 \limsup_{N \rightarrow \infty} \frac{1}{N} \sum_{n = 1}^N f(x_n, y_n),
\end{equation*}
where $(x_n)_{n > 1}, (y_n)_{n > 1}$ are u.d.\ sequences in the unit interval and $f$ is a continuous function on
$[0,1]^2$. In this case the a.d.f.\ $g$ of $(x_n,y_n)_{n > 1}$ is always a copula and we can write
\begin{equation}\label{inte}
 \lim_{N \rightarrow \infty} \frac{1}{N} \sum_{n = 1}^N f(x_n, y_n) = \int_0^1 \int_0^1 f(x,y) dg(x,y).
\end{equation}
The following theorem proved in Tichy \& Winkler \cite{TichyWinkler} is of particular relevance for us in order to show the
connection between uniform distribution preserving maps and approximations by means of shuffles of $M$ \cite{MST}.
\begin{theorem}
The set of all continuous piecewise linear u.d.p. mappings are dense in the set of all continuous u.d.p. mappings with respect to uniform convergence. 
\end{theorem}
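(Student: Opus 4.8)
The plan is to reduce the statement to an approximation problem for measure-preserving maps and then to build the approximant by hand. First I would record the standard reformulation of the u.d.p.\ property: combining the definition of a u.d.\ sequence with Weyl's criterion, a continuous $f\colon[0,1]\to[0,1]$ is u.d.p.\ if and only if for every continuous $h$ one has $\int_0^1 h(f(x))\,dx=\int_0^1 h(y)\,dy$, i.e.\ $f$ pushes the Lebesgue measure $\lambda$ forward to itself. This is exactly the measure-preservation condition appearing in the Proposition above, which characterises the $f$ whose graph supports a copula. Hence the theorem is equivalent to the assertion that continuous piecewise linear measure-preserving maps are dense, for the sup norm, in all continuous measure-preserving maps. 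For a continuous piecewise linear $g$ with finitely many linear laps, measure preservation takes the concrete combinatorial form
\begin{equation*}
\sum_{x\in g^{-1}(y)}\frac{1}{|g'(x)|}=1\qquad\text{for a.e. }y\in(0,1),
\end{equation*}
so the admissible $g$ are precisely the ``generalized tent maps'' whose reciprocal slopes sum to $1$ across every level.

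Next, fix a continuous measure-preserving $f$ and $\veps>0$. By uniform continuity choose $N$ so large that $f$ oscillates by less than $\veps$ on each domain interval $J_k=[(k-1)/N,k/N]$, and write $L_m=[(m-1)/N,m/N]$ for the analogous range intervals. Linear interpolation of $f$ at the nodes $k/N$ produces a continuous piecewise linear $L$ with $\|L-f\|_\infty<\veps$. The obstruction is that $L$ need not be measure-preserving: its level density $\rho_L(y)=\sum_{x\in L^{-1}(y)}1/|L'(x)|$ is only approximately $1$. A useful structural fact, which I would prove by summing the contribution $|L'|^{-1}\cdot|\mathrm{range}|=|\mathrm{domain}|$ of each lap, is that for \emph{any} continuous piecewise linear map into $[0,1]$ the density always satisfies $\int_0^1\rho_L(y)\,dy=1$; thus the defect $\rho_L-1$ is automatically \emph{mean zero}, and, because $L$ approximates the measure-preserving $f$, it is small in an averaged sense.

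The construction then corrects $L$ to an exactly measure-preserving $g$. I would encode how $f$ distributes each $J_k$ over the range by the doubly stochastic matrix $B_{km}=N\,\gamma_f(J_k\times L_m)$, where $\gamma_f$ is the doubly stochastic measure induced by $f$; the small oscillation of $f$ on $J_k$ forces $B_{km}=0$ unless $L_m$ lies within $\veps$ of $f(J_k)$. Using $B$ as a transport schedule, I subdivide each $J_k$ and define $g$ affinely on the pieces so that the images tile every $L_m$ exactly — which forces $\rho_g\equiv 1$, i.e.\ measure preservation — while mass from $J_k$ is sent only to nearby $L_m$, keeping $\|g-f\|_\infty=O(\veps)$.

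The hard part will be reconciling this exact tiling with \emph{continuity}: carried out naively the tiling is a shuffle-of-$M$-type rearrangement and is discontinuous. To repair it I would (i) order the affine pieces inside each $J_k$ and inside each $L_m$ monotonically, using the freedom to place diagonal or antidiagonal laps (the parameter $\omega$ of a shuffle) so that consecutive laps meet at common endpoints, and (ii) absorb the residual endpoint mismatches into thin connecting ramps. Because the defect $\rho_L-1$ is mean zero, the total domain length devoted to these correcting ramps can be driven to $0$, so their contribution to the sup-norm error is $o(1)$ and their reciprocal-slope contributions cancel against the compressions forced elsewhere, leaving $\rho_g\equiv1$ intact. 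Controlling this sewing step — keeping the map single-valued, continuous, and exactly measure-preserving all at once — is the technical heart of the argument; everything else is uniform continuity together with the reciprocal-slope bookkeeping.
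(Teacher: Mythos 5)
First, a point of reference: the paper does not prove this statement at all --- it is imported from Tichy \& Winkler as a known result --- so your proposal has to stand on its own merits rather than be compared with an in-paper argument.

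Your reductions are correct and well chosen: for continuous $f$ the u.d.p.\ property is indeed equivalent to $f_{\#}\lambda=\lambda$, the reciprocal-slope density $\rho_g(y)=\sum_{x\in g^{-1}(y)}1/|g'(x)|$ is the right bookkeeping for piecewise linear maps, and the transport-schedule construction is sound as far as it goes: splitting $J_k$ into pieces of lengths $\gamma_f(J_k\times L_m)$ and mapping each piece affinely onto the \emph{whole} of $L_m$ gives $\rho_g\equiv 1$ exactly, because $\sum_k N\gamma_f(J_k\times L_m)=1$, and keeps $\|g-f\|_\infty=O(\varepsilon+1/N)$. The genuine gap is the step you yourself flag as the ``technical heart'': the map just described has jump discontinuities at the piece boundaries, and the repair mechanism you offer does not work as stated. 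Inserting thin steep ramps destroys exact measure preservation: a ramp of slope $M$ crossing a range interval adds density $1/M>0$ there, so $\rho_g\equiv1$ fails, and restoring it forces you to re-compress the other laps over exactly those levels --- which is the same global problem you started with, not a lower-order correction. The appeal to the mean-zero defect $\rho_L-1$ is a red herring: sup-norm closeness of $L$ to $f$ only bounds $\int_a^b(\rho_L-1)\,dy$, not $\rho_L-1$ pointwise or in $L^1$, and in any case your final construction is built from $B$, not from $L$, so this quantity never enters. The sentence claiming the ramp contributions ``cancel against the compressions forced elsewhere, leaving $\rho_g\equiv1$ intact'' is an assertion of the theorem, not an argument for it.

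What is missing is a concrete combinatorial device making your step (i) precise, and it can be supplied without any ramps. Regard each piece of $J_k$ sent onto $L_m$ as an edge joining the grid points $(m-1)/N$ and $m/N$; since $f(J_k)$ is an interval, the occupied edges form a consecutive block. You are free to split each mass $\gamma_f(J_k\times L_m)$ into any number of sub-pieces, i.e.\ to choose each edge multiplicity freely (subject to being at least $1$), and in particular to choose the parities so that the resulting multigraph has odd degree exactly at two prescribed vertices. An Eulerian path then orders the pieces of $J_k$ so that consecutive laps share an endpoint (the traversal direction of each edge is exactly your orientation parameter $\omega$), which makes $g$ continuous on $J_k$ with no correction terms whatsoever; choosing the prescribed start and end vertices to be grid points adjacent to $f((k-1)/N)$ and $f(k/N)$ --- which lie in the vertex sets of both neighbouring blocks --- glues the chains continuously across the $J_k$'s. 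With that lemma in place your outline becomes a proof; without it, the proposal stops exactly where the proof has to begin.
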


\section{Theory of optimal transport}\label{transport}
In this section we briefly state fundamental results from the theory of the Monge-Kantorovich optimal transport problem.
The presented results on existence of optimizers and the dual problem formulation are stated in an adequate depth
such that the number theoretic questions under consideration are covered. Comprehensive accounts on the subject are 
Villani \cite{Villani2009} or Rachev \& R\"uschendorf \cite{RachRuesch98} for example, a set of lecture notes on the topic would be
Ambrosio \& Gigli \cite{AmbrosioGigli}.\\
For the basic presentation of the optimal transport problem we are following the above mentioned references.\\
\subsection{Problem formulation}
Let $X$ and $Y$ be Polish spaces and denote $\mathcal{P}(X)$ the set of all Borel probability measures on $X$ ($\mathcal{P}(Y)$ on $Y$).
For a Borel-measurable map $T:X\to Y$ and $\mu\in\mathcal{P}(X)$, the measure $T_{\#}\mu\in\mathcal{P}(Y)$ defined by
\begin{align*}
T_{\#} \mu(E)=\mu(T^{-1}(E))\quad\mbox{for}\;E\in\mathcal{B}(Y)
\end{align*}
is called the \emph{push forward of} $\mu$ \emph{through} $T$. Together with a Borel measurable cost function
$c:X\times Y\to\R\cup\{+\infty\}$ we can give:\\
\\
\underline{Monge formulation}: Let $\mu\in\mathcal{P}(X)$ and  $\nu\in\mathcal{P}(Y)$ and minimize
\begin{align}\label{eq:monge}
\int_X c(x,T(x))\mu(dx)
\end{align}
among all \emph{transport maps} $T$ from $\mu$ to $\nu$ (all maps for which $T_{\#} \mu=\nu$).
The transport map $T$ has the meaning that a unit mass is put from the point $x$ to the point $y=T(x)$ and at the
same time the distribution $\nu$ is achieved.\\
The relaxation of Monge's optimal transport problem is the following:\\
\\ 
\underline{Kantorovich formulation}: Let $\mu\in\mathcal{P}(X)$ and  $\nu\in\mathcal{P}(Y)$ and minimize
\begin{align}\label{eq:OpTrans}
\int_{X\times Y}c(x,y)\gamma(dx,dy)
\end{align}
in the set of $ADM(\mu,\nu)$ of all \emph{transport plans} $\gamma\in\mathcal{P}(X\times Y)$ from $\mu$ to $\nu$, i.e.,
the set of all Borel probability measures on $X\times Y$, such that
\begin{align*}
\gamma(A\times Y)=\mu(A) & \quad \forall A\in\mathcal{B}(X),\\
\gamma(X\times B)=\nu(B) & \quad \forall B\in\mathcal{B}(Y).\\
\end{align*}
Now the meaning of a transport plan is that $\gamma(A\times B)$, for $A\in\mathcal{B}(X)$ and $B\in\mathcal{B}(Y)$, denotes the mass initially placed in $A$
which is moved into $B$, in contrast to transport maps a unit mass can now be split. 
One can immediately answer the question of existence of an optimizer.
\begin{theorem}[Th. 1.5 from \cite{AmbrosioGigli}]
Assume $c$ is lower semicontinuous, then there exists a minimizer for problem (\ref{eq:OpTrans}).
\end{theorem}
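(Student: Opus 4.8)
The plan is to apply the direct method of the calculus of variations in the space of probability measures on $X\times Y$ equipped with the narrow (weak) topology. Three ingredients are needed: that the admissible set $ADM(\mu,\nu)$ is nonempty and sequentially compact, that it is closed under narrow convergence, and that the functional $\gamma\mapsto\int_{X\times Y}c\,d\gamma$ is narrowly lower semicontinuous. First I would observe that $ADM(\mu,\nu)$ is nonempty, since the product measure $\mu\otimes\nu$ satisfies both marginal constraints. I take $c$ to be bounded from below (the relevant case, e.g.\ $c\ge 0$, which already covers the continuous costs on $[0,1]^2$ of the motivating problem), so that $\int c\,d\gamma$ is well defined in $(-\infty,+\infty]$; if the infimum equals $+\infty$ the statement is trivial, so one may assume it is finite.

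For compactness I would use tightness together with Prokhorov's theorem. Since $X$ and $Y$ are Polish, the fixed marginals $\mu$ and $\nu$ are each tight: given $\veps>0$ choose compacts $K_X\subseteq X$ and $K_Y\subseteq Y$ with $\mu(X\setminus K_X)<\veps/2$ and $\nu(Y\setminus K_Y)<\veps/2$. Then $K_X\times K_Y$ is compact in the Polish space $X\times Y$, and for every $\gamma\in ADM(\mu,\nu)$ the marginal conditions give
\[
\gamma\bigl((X\times Y)\setminus(K_X\times K_Y)\bigr)\le\mu(X\setminus K_X)+\nu(Y\setminus K_Y)<\veps.
\]
Hence $ADM(\mu,\nu)$ is uniformly tight, and by Prokhorov it is relatively sequentially compact in the narrow topology. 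Closedness is then checked on the marginal constraints: if $\gamma_n\to\gamma$ narrowly with $\gamma_n\in ADM(\mu,\nu)$, then for every bounded continuous $\varphi$ on $X$ the function $\varphi\circ\pi_X$ is bounded continuous on $X\times Y$, so $\int\varphi\,d(\pi_X)_{\#}\gamma=\lim_n\int\varphi\circ\pi_X\,d\gamma_n=\int\varphi\,d\mu$; this forces $(\pi_X)_{\#}\gamma=\mu$, and symmetrically $(\pi_Y)_{\#}\gamma=\nu$, so $\gamma\in ADM(\mu,\nu)$.

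The main obstacle is the lower semicontinuity of the cost functional, since $c$ is only assumed lower semicontinuous (and may take the value $+\infty$), not continuous or bounded. Here I would exploit that a lower semicontinuous function bounded below on a metric space is the pointwise increasing limit of a sequence of bounded continuous functions $c_k\uparrow c$. For a narrowly convergent sequence $\gamma_n\to\gamma$, the bound $c\ge c_k$ gives $\int c\,d\gamma_n\ge\int c_k\,d\gamma_n$, and since $c_k$ is bounded and continuous, $\liminf_n\int c\,d\gamma_n\ge\lim_n\int c_k\,d\gamma_n=\int c_k\,d\gamma$. Taking the supremum over $k$ and invoking monotone convergence yields $\liminf_n\int c\,d\gamma_n\ge\int c\,d\gamma$, which is exactly the required lower semicontinuity.

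Finally I would assemble these facts by the direct method. Take a minimizing sequence $(\gamma_n)\subseteq ADM(\mu,\nu)$ with $\int c\,d\gamma_n$ converging to the infimum. By tightness and Prokhorov extract a subsequence converging narrowly to some $\gamma$; by closedness $\gamma\in ADM(\mu,\nu)$, and by lower semicontinuity $\int c\,d\gamma\le\liminf_n\int c\,d\gamma_n$, the latter being the infimum. Thus $\gamma$ attains the infimum and is the desired minimizer.
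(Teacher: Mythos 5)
Your proof is correct and follows essentially the same route as the paper's source for this statement (the paper gives no proof itself but cites Theorem 1.5 of Ambrosio \& Gigli, whose argument is exactly this direct method: tightness of $ADM(\mu,\nu)$ via tightness of the marginals, Prokhorov compactness, narrow closedness of the marginal constraints, and lower semicontinuity of $\gamma\mapsto\int c\,d\gamma$ by monotone approximation of $c$ with bounded continuous functions). Your added hypothesis that $c$ be bounded from below is not a loss of generality but a faithful reading: it is part of the hypotheses of the cited theorem, and some such condition is needed for the integral to be well defined.
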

The following theorem is an excerpt from Theorem 5.10 of Villani \cite{Villani2009}, it will give us the right tools for the
construction of optimal solutions for some particular examples. 
But at first we need some notions from \cite{RachRuesch98,Villani2009}.
\begin{defn}[$c$-cyclical monotonicity]
A set $\Gamma\subset X\times Y$ is $c$-cyclically monotone if for all $(x_n,y_n)\in\Gamma$ with $1\leq n\leq N$ for $N\in\N$, $x_{N+1}=x_1$, it holds that
\begin{align*}
\sum_{n=1}^N c(x_n,y_n)\leq \sum_{n=1}^N c(x_{n+1},y_n).
\end{align*}
\end{defn}
\begin{defn}[$c$-convexity]
Let $X,\,Y$ be sets and $c:X\times Y\to\R\cup\{+\infty\}$, a function
$f:X\to\R\cup\{+\infty\}$ is $c$-convex if it is not identically $+\infty$ and there exists $a:Y\to\R\cup\{\pm\infty\}$ such that
$$f(x)=\sup_{y\in Y}[a(y)-c(x,y)]\quad\mbox{for all}\;x\in X.$$
Its $c$-transform is defined by
$$f^c(y)=\inf_{x\in X}[f(x)+c(x,y)]\quad\mbox{for all}\;y\in Y,$$
and its $c$-subdifferential is the set
$$\delta_c f=\{(x,y)\in X\times Y\,\vert\,f^c(y)-f(x)=c(x,y)\},$$
or at given $x\in X$
$$\delta_c f(x)=\{y\in Y\,\vert\,f^c(y)-f(x)=c(x,y)\}.$$
\end{defn}
Now everything is clarified such that we can state the theorem.
\begin{theorem}[Th. 5.10 ii) \cite{Villani2009}]\label{th:fundamental}
Let $X$ and $Y$ be Polish spaces, $\mu\in\mathcal{P}(X)$ and $\nu\in\mathcal{P}(Y)$.
Let the cost function $c:X\times Y\to\R\cup\{+\infty\}$
be lower semicontinuous such that $c(x,y)\geq a(x)+b(y)$ for all $(x,y)\in X\times Y$ for some real-valued upper semicontinuous
functions $a\in L^1(\mu)$ and $b\in L^1(\nu)$. Furthermore assume that (\ref{eq:OpTrans}) is finite, then there is
a measurable $c$-cyclically monotone set $\Gamma\subset X\times Y$ such that for any $\gamma\in ADM(\mu,\nu)$
the following statements are equivalent:
\begin{itemize}
\item[(a)] $\gamma$ is optimal,
\item[(b)] $\gamma$ is concentrated on a $c$-cyclically monotone set,
\item[(c)] there is a $c$-convex function $f$ such that, $\gamma$-a.s. $f^c(y)-f(x)=c(x,y)$,
\item[(d)] $\gamma$ is concentrated on $\Gamma$.
\end{itemize}
\end{theorem}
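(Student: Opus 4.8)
The plan is to establish the four-fold equivalence by proving the cyclic chain (a) $\Rightarrow$ (b) $\Rightarrow$ (c) $\Rightarrow$ (a) and then deducing the statement about the universal set $\Gamma$ in (d) as a by-product. Throughout, the standing hypotheses are used in a precise way: lower semicontinuity of $c$ and the minorization $c(x,y)\geq a(x)+b(y)$ with $a\in L^1(\mu)$, $b\in L^1(\nu)$ guarantee that $\int c\,d\gamma$ is well defined in $(-\infty,+\infty]$ for every $\gamma\in ADM(\mu,\nu)$, that the potentials built below are integrable, and that no indeterminate $+\infty-\infty$ occurs; finiteness of the infimum in \eqref{eq:OpTrans} ensures the optimal value is a genuine real number against which optimality can be tested.

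First I would prove (a) $\Rightarrow$ (b). The heuristic is that an optimal plan cannot support an ``improving cycle'': if points $(x_1,y_1),\dots,(x_N,y_N)$ in the support of $\gamma$ violated $c$-cyclical monotonicity, one could reroute a little mass along the reassignment $y_i\mapsto x_{i+1}$ and strictly lower the total cost. The obstacle is that the support is in general a $\gamma$-null set of pairs, so individual points cannot literally be moved. The rigorous argument replaces each offending $(x_i,y_i)$ by a small product neighborhood $U_i\times V_i$ of positive $\gamma$-measure on which the cost defect stays strictly negative, disintegrates $\gamma$ over these neighborhoods, and constructs a competitor $\tilde\gamma$ that cyclically permutes the conditional masses on $\bigcup_i U_i\times V_i$ while leaving both marginals $\mu$ and $\nu$ unchanged; for a small amount of rerouted mass $\tilde\gamma$ stays in $ADM(\mu,\nu)$ and has strictly smaller cost, contradicting optimality. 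Making this measure-theoretic ``rotation'' legitimate—via the gluing lemma and disintegration, and keeping the defect controlled when $c$ is merely lower semicontinuous rather than continuous—is the main technical hurdle of the whole theorem.

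Next, (b) $\Rightarrow$ (c) is the Rockafellar--R\"uschendorf construction. Fixing a base point $(x_0,y_0)$ in the $c$-cyclically monotone set $\Gamma$ on which $\gamma$ concentrates, I would set
\[
f(x)=\sup\Big\{\big[c(x_0,y_0)-c(x_1,y_0)\big]+\cdots+\big[c(x_m,y_m)-c(x,y_m)\big]\Big\},
\]
the supremum running over all $m\geq 0$ and all finite chains $(x_1,y_1),\dots,(x_m,y_m)\in\Gamma$. The $c$-cyclical monotonicity of $\Gamma$ is exactly what forces $f(x_0)=0$, so $f$ is finite at $x_0$ and is a genuine function rather than the constant $+\infty$; writing $f(x)=\sup_{y}[a(y)-c(x,y)]$, where $a(y)$ collects the chains terminating at $y_m=y$, presents $f$ as $c$-convex in the sense of the definition of $c$-convexity above. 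A short verification then shows that for every $(x,y)\in\Gamma$ one has $f^c(y)-f(x)=c(x,y)$, and since $\gamma(\Gamma)=1$ this is precisely statement (c).

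Finally, (c) $\Rightarrow$ (a) is a soft duality argument. From the definition of the $c$-transform one has $f^c(y)-f(x)\leq c(x,y)$ for all $(x,y)$, with equality $\gamma$-a.s. Integrating against any competitor $\tilde\gamma\in ADM(\mu,\nu)$ and using the marginal constraints,
\[
\int_{X\times Y} c\,d\tilde\gamma\;\geq\;\int_Y f^c\,d\nu-\int_X f\,d\mu\;=\;\int_{X\times Y} c\,d\gamma,
\]
where the last equality uses the $\gamma$-a.s. identity and the minorization hypothesis makes $f,f^c$ integrable so that the splitting is valid; hence $\gamma$ is optimal. For (d) I would fix one optimal plan (its existence follows from the preceding existence theorem), run (a) $\Rightarrow$ (b) $\Rightarrow$ (c) to obtain a single $c$-convex $f$, and set $\Gamma:=\delta_c f$, which is measurable and $c$-cyclically monotone. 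The displayed chain of (in)equalities identifies the optimal value with $\int_Y f^c\,d\nu-\int_X f\,d\mu$; therefore, for \emph{any} optimal $\gamma$, the nonnegative integrand $c(x,y)-\big(f^c(y)-f(x)\big)$ has vanishing $\gamma$-integral and so is $0$ $\gamma$-a.s., i.e. $\gamma(\Gamma)=1$. This proves (a) $\Rightarrow$ (d), while (d) $\Rightarrow$ (b) is immediate because $\Gamma$ is $c$-cyclically monotone, closing the equivalence.
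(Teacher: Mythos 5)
The paper itself contains no proof of Theorem \ref{th:fundamental}: it is quoted as an excerpt of Theorem 5.10(ii) of Villani \cite{Villani2009} and used as a black box, so your attempt can only be compared with Villani's proof. Your skeleton --- (a)$\Rightarrow$(b) by rerouting mass along an improving cycle, (b)$\Rightarrow$(c) by the Rockafellar--R\"uschendorf construction, (c)$\Rightarrow$(a) by integrating $f^c(y)-f(x)\leq c(x,y)$, and (d) as a by-product --- is the classical argument for \emph{continuous} costs, and your (b)$\Rightarrow$(c) step is indeed essentially the one in the literature. The genuine gap is in (a)$\Rightarrow$(b). Your rerouting needs product neighborhoods $U_i\times V_i$ of the offending points on which the defect
\begin{equation*}
\sum_{i}\bigl[c(x_{i+1}',y_i')-c(x_i',y_i')\bigr]
\end{equation*}
remains strictly negative. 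Lower semicontinuity controls the wrong side: it keeps the subtracted terms $c(x_i',y_i')$ from dropping much below $c(x_i,y_i)$ (harmless, they enter with a minus sign), but it places no upper bound on the rerouted terms $c(x_{i+1}',y_i')$ near $(x_{i+1},y_i)$ --- a lower semicontinuous function may jump upward arbitrarily off a point --- so the required neighborhoods need not exist. What you flag as ``the main technical hurdle'' is therefore an obstruction to this route, not a technicality one can push through. Villani's proof of this implication is different in kind: it never perturbs $\gamma$, but invokes the Kantorovich duality (part (i) of his Theorem 5.10, which your proposal nowhere uses) to produce $c$-convex potentials $\psi_k$ with $\int\psi_k^c\,d\nu-\int\psi_k\,d\mu\to\int c\,d\gamma$; since $c-(\psi_k^c-\psi_k)\geq 0$ pointwise and its $\gamma$-integral tends to $0$, it tends to $0$ $\gamma$-a.s.\ along a subsequence, and the set $\Gamma$ on which this convergence holds is measurable, $c$-cyclically monotone, and carries every optimal plan --- giving (a)$\Rightarrow$(d)$\Rightarrow$(b) and the universal $\Gamma$ in one stroke. (An alternative repair, due to Beiglb\"ock, Goldstern, Maresch and Schachermayer, keeps your rerouting idea but executes it purely measure-theoretically on $\gamma^{\otimes N}$ restricted to the defect set, with no neighborhoods and no continuity assumption.)

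Two further soft spots. In (c)$\Rightarrow$(a) you assert that the minorization makes $f$ and $f^c$ integrable ``so that the splitting is valid''; this is unjustified --- a $c$-convex $f$ satisfying (c) need not be separately $\mu$- and $\nu$-integrable, the splitting can read $\infty-\infty$, and Villani inserts a truncation argument precisely at this spot (the same issue infects your derivation of (d), which reuses that splitting for an arbitrary optimal $\gamma$). Second, as quoted in the paper the cost is allowed to take the value $+\infty$, and then the stated equivalence is actually false: Ambrosio and Pratelli exhibit a lower semicontinuous cost with values in $\{1,2,+\infty\}$ on $[0,1]^2$ with Lebesgue marginals admitting a $c$-cyclically monotone plan of finite cost that is not optimal. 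Villani's part (ii) assumes $c$ real-valued; any complete proof must use that hypothesis, and in your sketch its absence would surface in (b)$\Rightarrow$(c), where chains meeting $\{c=+\infty\}$ degenerate the R\"uschendorf supremum. In short: the skeleton is right, but the pivotal implication (a)$\Rightarrow$(b) requires the duality-based argument (or the measure-theoretic rerouting), (c)$\Rightarrow$(a) requires truncation, and real-valuedness of $c$ must enter somewhere.
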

Another variant - or better formulation - of problem (\ref{eq:OpTrans}) is through a coupling of random variables.
This formulation is of more probabilistic nature and perfectly suits applications from mathematical finance.\\
\\
\underline{Coupling formulation}: At first the \textquoteleft$\inf$\textquoteright{} from (\ref{eq:OpTrans}) is turned
into a \textquoteleft$\sup$\textquoteright{}, according to a standard presentation from R\"uschendorf \cite{Rueschen}.
Secondly instead of \emph{transport plan} the notion \emph{coupling} is used, i.e. coupling of two random variables with fixed marginal distributions.
Then one may write problem (\ref{eq:OpTrans}) as, cf.\cite[Sec. 4.2]{Rueschen},
\begin{align}\label{eq:coupling}
\sup\{\erw(c(X_1,X_2))\,\vert\,X_1,\,X_2\,\mbox{couplings of}\,\mu,\nu\,\mbox{with}\,P_{X_1}=\mu\,\mbox{and}\,P_{X_2}=\nu\}.
\end{align}
The supremum is taken over the set of all bivariate $X\times Y$-valued random variables $(X_1,X_2)$
with given marginal distributions $\mu$ and $\nu$. Consequently, $\gamma\in ADM(\mu,\nu)$ corresponds to the bivariate distribution of $(X_1,X_2)$.\\
\\
For translating Theorem \ref{th:fundamental} to this maximization situation one needs to adapt the $c$-convexity notion.
Now a function $f:X\to\R$ is called $c$-convex if it has a representation $f(x)=\sup_y\{c(x,y)+a(y)\},$ for some function $a$.
The $c$-subdifferential of $f$ at $x$ is directly (hiding the $c$-transform) given by
$$\partial_c f(x)=\{y\,\vert\,f(z)-f(x)\geq c(z,y)-c(x,y)\;\forall\,z\in X\}$$
and $\partial_c f=\{(x,y)\in X\times Y\,\vert\,y\in\partial_c f(x)\}$.\\
Under the assumptions of Theorem \ref{th:fundamental} for $X,\,Y$ and lower semicontinuity of $c$ one gets an analogous statement.
\begin{theorem}[Th. 4.7 from \cite{Rueschen}]\label{Th:optimality}
Let $c$ be such that $c(x,y)\geq a(x)+b(y)$ for some $a\in L^1(\mu)$, $b\in L^1(\nu)$) and assume finiteness of (\ref{eq:coupling}).
Then a pair $(X_1,X_2)$ with $X_1\stackrel{d}{\sim}\mu$, $X_2\stackrel{d}{\sim}\nu$ is an optimal $c-$coupling between $\mu$ and $\nu$
if and only if $$(X_1,X_2)\in\partial_c f\;\mbox{a.s.}$$ for some $c$-convex function $f$, equivalently, $X_2\in\delta_c f(X_1)$ a.s.
\end{theorem}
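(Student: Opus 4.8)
The plan is to obtain this maximization statement as a corollary of the minimization result Theorem \ref{th:fundamental} by negating the cost. Writing $\tilde c=-c$ and letting $\gamma$ denote the joint law of the coupling $(X_1,X_2)$, the problem (\ref{eq:coupling}) of maximizing $\erw(c(X_1,X_2))$ over all couplings with marginals $\mu,\nu$ is literally the problem (\ref{eq:OpTrans}) of minimizing $\int_{X\times Y}\tilde c\,d\gamma$ over $\gamma\in ADM(\mu,\nu)$. Thus an optimal $c$-coupling is exactly an optimal transport plan for the cost $\tilde c$, and it suffices to translate the equivalence (a)$\Leftrightarrow$(c) of Theorem \ref{th:fundamental} back into the language of $c$.

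First I would set up the dictionary between the two sets of notions. For the cost $\tilde c$ the definitions from Theorem \ref{th:fundamental} read: $g$ is $\tilde c$-convex if $g(x)=\sup_{y}[\alpha(y)-\tilde c(x,y)]=\sup_{y}[\alpha(y)+c(x,y)]$ for some $\alpha$, which is precisely the maximization notion of $c$-convexity adopted in the text (take $a=\alpha$). Its $\tilde c$-transform is $g^{\tilde c}(y)=\inf_x[g(x)+\tilde c(x,y)]=\inf_x[g(x)-c(x,y)]$. I would then check that the two subdifferentials coincide: from the text's definition, $y\in\partial_c g(x)$ means $g(z)-g(x)\geq c(z,y)-c(x,y)$ for all $z$, i.e. $g(x)-c(x,y)\leq g(z)-c(z,y)$ for all $z$, i.e. $g(x)-c(x,y)=\inf_z[g(z)-c(z,y)]=g^{\tilde c}(y)$. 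Equivalently $g^{\tilde c}(y)-g(x)=\tilde c(x,y)$, which is exactly the condition $(x,y)\in\delta_{\tilde c}g$ appearing in Theorem \ref{th:fundamental}(c). Hence $\partial_c g=\delta_{\tilde c}g$ as subsets of $X\times Y$, and $(X_1,X_2)\in\partial_c g$ a.s.\ is the same as saying $\gamma$ is concentrated on $\delta_{\tilde c}g$.

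With the dictionary in place the argument is mechanical: apply Theorem \ref{th:fundamental} to the cost $\tilde c$. The equivalence (a)$\Leftrightarrow$(c) states that $\gamma$ minimizes $\int\tilde c\,d\gamma$ iff there is a $\tilde c$-convex $g$ with $g^{\tilde c}(y)-g(x)=\tilde c(x,y)$ holding $\gamma$-a.s.; by the translation above this says exactly that $(X_1,X_2)$ maximizes $\erw(c(X_1,X_2))$ iff $(X_1,X_2)\in\partial_c f$ a.s.\ for the $c$-convex function $f=g$, which is the assertion (the pointwise form $X_2\in\delta_c f(X_1)$ a.s.\ being merely the fibrewise rephrasing of $(X_1,X_2)\in\partial_c f$).

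The one genuinely delicate step — and the place I expect the main difficulty — is matching the hypotheses, since Theorem \ref{th:fundamental} is invoked for $\tilde c=-c$. I would verify that the semicontinuity assumed on $c$ yields lower semicontinuity of $\tilde c$, and that the bound $c\geq a+b$ together with finiteness of (\ref{eq:coupling}) furnishes the integrable envelopes needed both to make $\int\tilde c\,d\gamma$ well defined and to guarantee that the functions $f$ and $f^{\tilde c}$ produced by the construction lie in $L^1(\mu)$ and $L^1(\nu)$; this integrability is precisely what legitimises the chain $\erw[c(X_1,X_2)]=\int f\,d\mu-\int f^{\tilde c}\,d\nu$ used in the easy (sufficiency) direction and hence the comparison against competing couplings. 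If one prefers a self-contained argument, the sufficiency direction follows directly from $c(x,y)\leq f(x)-f^{\tilde c}(y)$ with equality on $\partial_c f$, while the harder necessity direction reduces to showing that the support of an optimal $\gamma$ is $c$-cyclically monotone in the reversed (maximization) sense and then reconstructing $f$ by the Rockafellar-type chain formula over that set.
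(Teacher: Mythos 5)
The paper itself contains no proof of this statement: it is quoted as Th.~4.7 from R\"uschendorf \cite{Rueschen}, after the remark that under the assumptions of Theorem \ref{th:fundamental} for $X,Y$ \emph{and lower semicontinuity of} $c$ one gets the analogous maximization statement. So your plan --- deduce it from Theorem \ref{th:fundamental} by passing to $\tilde c=-c$ --- is exactly the route the paper gestures at, and your dictionary is correct: with $\tilde c=-c$ the maximization notion of $c$-convexity coincides with $\tilde c$-convexity in the sense of Theorem \ref{th:fundamental}, and your computation showing $\partial_c g=\delta_{\tilde c}\,g$ is right, so the equivalence (a)$\Leftrightarrow$(c) would translate verbatim into the assertion.

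The genuine gap sits precisely at the step you deferred, and it does not resolve the way you predict. Negating the cost reverses \emph{both} the semicontinuity and the direction of the bounds: if $c$ is lower semicontinuous (the standing assumption of this section, stated immediately before the theorem), then $\tilde c=-c$ is \emph{upper} semicontinuous, not lower semicontinuous; and the hypothesis $c(x,y)\geq a(x)+b(y)$ turns into the \emph{upper} bound $\tilde c(x,y)\leq -a(x)-b(y)$, whereas Theorem \ref{th:fundamental} requires a lower bound $\tilde c(x,y)\geq \tilde a(x)+\tilde b(y)$ by integrable upper semicontinuous functions. Hence, under the stated hypotheses, Theorem \ref{th:fundamental} simply cannot be invoked for $\tilde c$: the verification that ``the semicontinuity assumed on $c$ yields lower semicontinuity of $\tilde c$'' fails, and the integrable envelope points the wrong way. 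Your reduction is perfectly sound when $c$ is continuous and bounded --- e.g.\ on the compact square $[0,1]^2$, which is the only case the paper actually uses (the sine example), since there lower and upper semicontinuity coincide and all integrability is free --- but it does not prove the theorem as stated. To cover the stated hypotheses one must argue the maximization problem directly, namely show that an optimal coupling is concentrated on a set that is cyclically monotone in the maximization sense ($\sum_n c(x_n,y_n)\geq\sum_n c(x_{n+1},y_n)$), run the Rockafellar-type construction of $f$ on that set, and do the integrability bookkeeping that legitimizes $\erw[c(X_1,X_2)]=\int f\,d\mu-\int f^{\tilde c}\,d\nu$; this is the content of R\"uschendorf's own proof, which the paper cites rather than reproduces.
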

\underline{Uniform Marginals}\\
In the following sections we will apply Theorem \ref{th:fundamental} to problems in the spirit of \cite{FialovaStrauch}
and in particular to problem (\ref{problem}). Therefore in these applications we choose $X=Y=[0,1]$,
fix the marginal distributions to be uniform $\mu=\nu=\mathcal{U}[0,1]$ and choose the cost function $c:[0,1]^2\to\R$ to be continuous.
For this setting Theorem \ref{th:fundamental} applies and (\ref{eq:OpTrans}) is clearly finite. In this setting
the Monge problem is an optimization problem with respect to uniform distribution preserving maps.
\begin{remark}
In general the question if an optimal transport plan in the Kantorovich formulation (\ref{eq:OpTrans}) is induced by an optimal transport
map from the Monge formulation \eqref{eq:monge} is not answered in the corresponding literature.
But there are some particular situations which allow for affirmative results. For instance for Borel measures $\mu,\,\nu$ on $\R^d$, $c(x,y)=h(x-y)$
for a strictly convex function $h$ and $\mu$ absolutely continuous with respect to Lebesgue measure there exists an unique map $s$ such
that the measure $\gamma=(\mathbf{id}\times s)_{\#}\mu$ is optimal. For the situation of one dimensional marginals several
examples of a similar structure are given in Uckelmann \cite{Uckelmann} and R{\"u}schendorf \& Uckelmann \cite{RueschUckel}.\\
Theorem \ref{th:fundamental} or its complete formulation which is Theorem 5.10 from Villani \cite{Villani2009} heavily rely on a dual problem formulation.
How far the duality relation can be exploited, i.e., up to which parameter configuration the value of the primal solution equals the value of dual solution, is 
recently discussed in Beiglb\"ock \& Schachermayer \cite{BeiglSchach2011} and Beiglb{\"o}ck et al. \cite{Beigletal2012}.
\end{remark}

\subsection{Application of  Theorem \ref{th:fundamental}}
Now we are ready to give two applications of Theorem \ref{th:fundamental}.
The first one, dealing with a result from \cite{FialovaStrauch}, concerns a direct verification of optimality via
the $c$-cyclical monotonicity property. The second one treats an open problem from \cite{Prob129} by constructing
explicitly a $c$-convex function.\\
\\
We start by studying Theorem 5. from Fialov{\'a} \& Strauch \cite{FialovaStrauch} from the transport point of view.

\begin{theorem}[\cite{FialovaStrauch}]
Let $c:[0,1]^2\to\R$ be a Riemann integrable function with $\frac{\partial^2 c}{\partial x\partial y}>0$ for all $(x,y)\in(0,1)^2$.
Then
\begin{align*}
\max_{\gamma\, -\,\mbox{copula}} \int_0^1\int_0^1 c(x,y)\gamma(dx,dy)&=\int_0^1 c(x,x)dx,\\
\min_{\gamma\, -\,\mbox {copula}} \int_0^1\int_0^1 c(x,y)\gamma(dx,dy)&=\int_0^1 c(x,1-x)dx,
\end{align*}
where the maximum is attained in $\gamma^u=\min\{x,y\}$ and the minimum in $\gamma^l(x,y)=\max\{x+y-1,0\}$, uniquely.
\end{theorem}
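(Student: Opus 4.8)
The plan is to verify optimality of the two Fr\'echet--Hoeffding bounds directly through the $c$-cyclical monotonicity characterization of Theorem \ref{th:fundamental}, rather than evaluating the extremal integrals by brute force. The single ingredient driving everything is that the hypothesis $\partial^2 c/\partial x\partial y>0$ on $(0,1)^2$ is equivalent to \emph{strict supermodularity}: for any $x_1<x_2$ and $y_1<y_2$,
\begin{equation*}
c(x_2,y_2)+c(x_1,y_1)-c(x_2,y_1)-c(x_1,y_2)=\int_{x_1}^{x_2}\!\!\int_{y_1}^{y_2}\frac{\partial^2 c}{\partial x\partial y}(s,t)\md t\md s>0.
\end{equation*}
I would establish this first, since both the existence and the uniqueness statements reduce to it.

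For the maximization I note that $\gamma^u=M$ concentrates all its mass on the diagonal $D=\{(t,t):t\in[0,1]\}$, so after replacing $c$ by $-c$ (turning the max into the minimization of Theorem \ref{th:fundamental}) it suffices to check that $D$ is $c$-cyclically monotone in the maximizing sense. The two-point instance of the defining inequality is precisely the strict supermodularity above: matching $x_1\mapsto x_1,\,x_2\mapsto x_2$ beats the swap. The general $N$-point cyclic inequality then follows by decomposing the cyclic permutation into transpositions and applying supermodularity repeatedly, i.e.\ via the Hardy--Littlewood--P\'olya rearrangement inequality. The hypotheses of Theorem \ref{th:fundamental} are readily met, as $c$ is Riemann integrable hence bounded on $[0,1]^2$ (so $a,b$ may be taken constant and finiteness is immediate), with the required semicontinuity available from the regularity implicit in the existence of the mixed partial. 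Concluding optimality of $M$, the value is read off from the concentration on $D$ as $\int_{[0,1]^2}c\md M=\int_0^1 c(t,t)\md t$. The minimization case is entirely symmetric: $\gamma^l=W$ lives on the antidiagonal $A=\{(t,1-t):t\in[0,1]\}$, the reversed ordering of the second coordinates converts strict supermodularity into the minimizing cyclical inequality, and the value becomes $\int_0^1 c(t,1-t)\md t$.

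The uniqueness assertion is where the strictness $\partial^2 c/\partial x\partial y>0$ (rather than $\geq 0$) is essential, and I expect it to be the main obstacle. By Theorem \ref{th:fundamental} any maximizer $\gamma$ is concentrated on a $c$-cyclically monotone set $\Gamma$. Strict supermodularity forbids $\Gamma$ from containing an inversion, i.e.\ two points $(x_1,y_1),(x_2,y_2)$ with $x_1<x_2$ but $y_1>y_2$, since swapping their second coordinates would strictly increase the pairwise sum and violate cyclical monotonicity. Hence $\Gamma$ is monotone nondecreasing and therefore contained in the graph of a nondecreasing function; the requirement that both marginals be uniform forces this function to be measure preserving and nondecreasing, hence the identity almost everywhere, giving $\gamma=\gamma^u=M$. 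The same reasoning with ``nonincreasing'' in place of ``nondecreasing'' pins down $\gamma=\gamma^l=W$ for the minimum.

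As a robust cross-check, and to handle merely Riemann-integrable $c$ without invoking semicontinuity, one can bypass the transport machinery altogether by a Hoeffding/Tchen-type identity: for any copula $C$ with associated measure $\gamma$,
\begin{equation*}
\int_{[0,1]^2}c\md M-\int_{[0,1]^2}c\md\gamma=\int_{[0,1]^2}\bigl(M(x,y)-C(x,y)\bigr)\frac{\partial^2 c}{\partial x\partial y}(x,y)\md x\md y,
\end{equation*}
so that the Fr\'echet--Hoeffding bounds $W\leq C\leq M$ from \eqref{frechet} together with positivity of the integrand yield both extremal values at once, and strict positivity yields uniqueness. I would nevertheless present the cyclical-monotonicity argument as the primary proof, since it is exactly the transport-theoretic viewpoint developed above and uses Theorem \ref{th:fundamental} as intended.
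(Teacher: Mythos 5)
Your primary argument is essentially the paper's own proof: both establish optimality of $M$ (and, symmetrically, $W$) by verifying the $c$-cyclical monotonicity criterion of Theorem \ref{th:fundamental} on the diagonal (resp.\ antidiagonal), with the decisive two-point inequality coming from writing the rectangle increment $c(x_2,y_2)+c(x_1,y_1)-c(x_2,y_1)-c(x_1,y_2)$ as the double integral of $\frac{\partial^2 c}{\partial x\partial y}>0$. The only divergence in this core part is how the two-point case is promoted to $N$-cycles: the paper runs an explicit induction on the cycle length (splitting off the largest $x_n$ and an $(N-1)$-point subcycle and doing one more double-integral estimate), whereas you decompose the cyclic shift into transpositions and apply strict supermodularity to each swap (the Hardy--Littlewood--P\'olya rearrangement mechanism); these are the same argument in different clothing. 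You do, however, go beyond the paper's printed proof in two respects. First, the paper proves only that the diagonal support is $c$-cyclically monotone, i.e.\ optimality; the uniqueness asserted in the statement is never proved there, only addressed in a surrounding remark citing Rochet and Tichy--Winkler. Your no-inversion argument supplies exactly this missing piece, modulo the standard detail that a monotone set may fail to be a graph over at most countably many abscissae, so that atomlessness of the uniform marginal is what lets you conclude $\gamma=M$. Second, your Hoeffding--Tchen identity is a genuinely different, transport-free route which yields both bounds and uniqueness at once and is in fact closer in spirit to Fialov\'a and Strauch's original method. One caveat: your claim that the semicontinuity required by Theorem \ref{th:fundamental} is ``implicit in the existence of the mixed partial'' is not justified --- a Riemann integrable $c$ with positive mixed partial on the open square need not be upper semicontinuous (e.g.\ $c(x,y)=xy-f(x)$ with $f$ the Thomae function) --- but the paper's proof passes over exactly the same point, so this is an inherited rather than a new gap, and your identity-based alternative is precisely the way around it.
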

\begin{remark}
The maximum and the minimum are attained at the co-called upper and lower Fr\'{e}chet bounds.\\
Now by means of the $c$-cyclical monotonicity criterion for optimality we can give an alternative proof of the Theorem above.
The procedure is similar to the one used in the proof of Proposition 1. from Rochet \cite{Rochet} in a slightly different context.
Rochet proves that if the derivative condition is fulfilled the support of a transport plan $\Gamma(x)$ is $c$-cyclically monotone if and only if
$\Gamma(\cdot)$ is non-decreasing. Combining this with a result on uniform distribution preserving maps from
Tichy \& Winkler \cite{TichyWinkler} one arrives at Fialov{\'a} \& Strauch's result that $\Gamma(x)=x$
is the maximizing transport plan and $\Gamma(x)=1-x$ is the minimizing one.\\
Notice Theorem 3.1.2.\ from \cite{RachRuesch98} includes the situations of Theorem 5. and 6. from Fialov{\'a} \& Strauch \cite{FialovaStrauch}.
\end{remark}
\begin{proof}
We need to show that for $N\in\N$ and $x_0,\,x_1,\ldots,x_N$
\begin{align*}
 \sum_{n=0}^N c(x_{n+1},y_n)-c(x_n,y_n)\leq 0,
\end{align*}
where $x_{N+1}=x_0$ and $(x_n,y_n)$ are in the support of the to be shown optimal measure. In our particular situation for
the upper bound $y_n=\Gamma(x_n)=x_n$.
We start with a cycle of length 2 ($N=1$) and points $x_0,\,x_1$ and assume w.l.o.g $x_1>x_0$. From
\begin{align*}
c(x_{0},x_{1})-c(x_1,x_{1})+c(x_{1},x_0)-c(x_0,x_0)=-\int_{x_0}^{x_{1}}\int_{x_0}^{x_{1}}\frac{\partial^2 c}{\partial
x\partial y}(x,y)dydx\leq 0
\end{align*}
the statement follows. Now assume that for all cycles of length $N$ the statement holds true,
\begin{align*}
\sum_{n=0}^{N-1} c(x_{n+1},x_n)-c(x_n,x_n)\leq 0,
\end{align*}
and choose arbitrary $x_0,\,x_1,\ldots,x_N$ (again w.l.o.g. $x_N=\max_{n\in\{0,1,\ldots,N\}} x_n$).
Fix a sub-cycle of length $N-1$ with $x'_n=x_n$ for $0\leq n\leq N-1$ and $x'_N=x_0$. Then using 
\begin{align*}
A:=\sum_{n=0}^{N-1} c(x_{n+1}',x_n')-c(x_n',x_n')\leq 0,
\end{align*}
we get ($x_{N+1}=x'_{N}=x_0$),
\begin{align*}
\sum_{n=0}^{N} c(x_{n+1},x_n)-c(x_n,x_n)=&A+[c(x_N,x_{N-1})-c(x_{N-1},x_{N-1})]-[c(x_0,x_{N-1})-c(x_{N-1},x_{N-1})]\\
&+[c(x_{N+1},x_N)-c(x_N,x_N)]\\
\leq& c(x_N,x_{N-1})-c(x_0,x_{N-1})+c(x_{0},x_N)-c(x_N,x_N)\\
=&-\int_{x_0}^{x_N}\int_{x_{N-1}}^{x_N}\frac{\partial^2 c}{\partial x\partial y}(x,y) dydx\leq 0,
\end{align*}
which completes the proof.
\end{proof}
{\bf{The sine example}}\\
Now we can turn our view on a particular result from Uckelmann \cite{Uckelmann}, see R{\"u}schendorf \& Uckelmann \cite{RueschUckel}
as well, which somehow perfectly matches the sine question. It is an open problem from \cite{Prob129}) which could not be treated by
direct analytical methods.\\
For dealing with this example one may utilize a modification of Theorem 1. from \cite{Uckelmann}.
We re-state the following version of this result and sketch its proof.
\begin{remark}
In this particular setting it is possible to construct explicitly an $c$-convex function $f$ such that Theorem \ref{Th:optimality} can be used
to deduce the optimal transport plan, i.e. optimal coupling. Notice, see \cite{Rueschen}, that $y\in\partial_c f(x)$
if and only if $\exists\,a(=a(y))\in\R$ such that
\begin{align}\label{eq:ubdiffeasy}
\psi_{y,a}(x)=c(x,y)+a(y)=f(x)\quad\mbox{and}\quad \psi_{y,a}(\xi)=c(\xi,y)+a(y)\leq f(\xi)\quad\forall\,\xi\in X.
\end{align}
\end{remark}

\begin{theorem}
Let $\mu,\,\nu$ be the uniform distribution on $[0,1]$ and the cost function $c(x,y)=\phi(x+y)$ with 
$\phi:[0,2]\to\R$. In particular we assume that $\phi\in\mathcal{C}^2[0,1]$ and that there is $k\in(0,2)$
such that $\phi''(x)<0$ for $x\in[0,k)$ and $\phi''(x)>0$ for $x\in(k,2]$. If $\beta\in(0,1)$ denotes the solution to
$$\phi(2\beta)-\phi(\beta)=\beta\phi'(\beta),$$
then
\begin{align*}
\Gamma(x)=
\left\{\begin{array}{cc}
\beta-x,& x\in[0,\beta),\\
x,&x\in[\beta,1],
\end{array}\right.
\end{align*}
induces by $(U,\Gamma(U))$ for some standard uniformly distributed $U$ an optimal $c$-coupling between $P$ and $Q$.
\end{theorem}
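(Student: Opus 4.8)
The plan is to invoke the optimality criterion of Theorem \ref{Th:optimality}: it suffices to produce a $c$-convex function $f$ whose $c$-subdifferential contains the graph of $\Gamma$, since then the coupling $(U,\Gamma(U))$ is optimal as soon as it is admissible. Admissibility is immediate, because $\Gamma$ restricts to the reflection $x\mapsto\beta-x$ on $[0,\beta)$ and to the identity on $[\beta,1]$, so it is Lebesgue measure preserving and $(U,\Gamma(U))$ has uniform marginals. Thus the whole task reduces to constructing $f$ and verifying the supporting condition \eqref{eq:ubdiffeasy} along $y=\Gamma(x)$.

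First I would guess the dual potential from the first order (envelope) conditions. Along the graph of $\Gamma$ the sum $x+\Gamma(x)$ equals the constant $\beta$ on $[0,\beta)$ and equals $2x$ on $[\beta,1]$; stationarity of $y\mapsto \phi(x+y)+a(y)$ at $y=\Gamma(x)$ then forces $a'(y)=-\phi'(\beta)$ on $[0,\beta]$ and $a'(y)=-\phi'(2y)$ on $[\beta,1]$. I fix the additive constant by continuity at $\beta$ and set $f(x):=\sup_{y\in[0,1]}\{\phi(x+y)+a(y)\}$, which is $c$-convex by construction. A short computation shows that $f$ is affine with slope $\phi'(\beta)$ on $[0,\beta]$ and equals $a(\beta)+\tfrac12(\phi(2x)+\phi(2\beta))$ on $[\beta,1]$, the two pieces matching continuously precisely because the defining equation $\phi(2\beta)-\phi(\beta)=\beta\phi'(\beta)$ places the point $(2\beta,\phi(2\beta))$ on the tangent line of $\phi$ at $\beta$.

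The core of the proof is to check that for every $x$ the supremum defining $f$ is attained at $y=\Gamma(x)$, i.e.\ $\phi(x+y)+a(y)\le f(x)$ for all $y$, with equality at $\Gamma(x)$. Writing $T$ for the tangent line of $\phi$ at $\beta$ and $h:=\phi-T$, the sign conditions on $\phi''$ together with the $\beta$-equation first force $\beta<k<2\beta$ and then give the global sign pattern $h\le 0$ on $[0,2\beta]$ and $h\ge 0$ on $[2\beta,2]$ (concavity of $\phi$ on $[0,k]$ handles the first interval, convexity on $[k,2]$ together with $h(2\beta)=0$ the rest). With this, the inequality on the ``diagonal'' block $x,y\in[\beta,1]$ collapses to the midpoint convexity $\phi(x+y)\le\tfrac12(\phi(2x)+\phi(2y))$ on $[2\beta,2]\subset[k,2]$, while on the block $y\in[0,\beta]$ it reduces exactly to $\phi(x+y)\le T(x+y)$, i.e.\ $h(x+y)\le 0$ for $x+y\in[0,2\beta]$. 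Equality in each case occurs precisely at $y=\Gamma(x)$.

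The main obstacle is the mixed block, where $x$ and $y$ lie on opposite sides of $\beta$. Here I would first use the $\beta$-equation to telescope the candidate inequality into the clean form $h(x+y)\le\tfrac12 h(2y)$ (and symmetrically $h(x+y)\le\tfrac12 h(2x)$). This is then settled by the convexity of $h$ on $[2\beta,2]$: if $x+y\le 2\beta$ the left side is nonpositive while the right side is nonnegative, whereas if $x+y>2\beta$ the chord inequality for the convex $h$ between $2\beta$ (where $h=0$) and $2y$ gives $h(x+y)\le\frac{x+y-2\beta}{2(y-\beta)}h(2y)$, and the elementary bound $x\le\beta$ turns the fraction into $\tfrac12$. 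Assembling the three blocks shows $(x,\Gamma(x))\in\partial_c f$ almost surely, so Theorem \ref{Th:optimality} yields optimality of $(U,\Gamma(U))$.
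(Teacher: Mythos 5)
Your proof is correct, and at the top level it follows the same route as the paper: you build exactly the same dual potential (your $f$, obtained from the first-order conditions and glued at $\beta$ via the defining equation $\phi(2\beta)-\phi(\beta)=\beta\phi'(\beta)$, coincides up to an additive constant with the paper's $f_1$ on $[0,\beta)$ and $f_2$ on $[\beta,1]$), and you conclude with the same subdifferential criterion, Theorem \ref{Th:optimality}. The genuine difference is in the core verification $c(\xi,\Gamma(x))+a(\Gamma(x))\le f(\xi)$ for all pairs: the published proof does not carry this out at all, calling it ``a rather lengthy and careful analysis'' and deferring to \cite{Strauch2015}; the analysis intended there (present in the paper's source but suppressed) is a four-case study that locates interior critical points via the auxiliary point $\tau\in(\beta,2\beta)$ with $\phi'(\tau)=\phi'(\beta)$ and tracks signs of partial derivatives of defect functions $F_2$, $F_3$. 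You instead reduce everything to the tangent-deviation $h=\phi-T$: its global sign pattern ($h\le 0$ on $[0,2\beta]$, $h\ge 0$ on $[2\beta,2]$, forced by $\beta<k<2\beta$, which indeed follows from the $\beta$-equation and the curvature hypotheses) disposes of the two diagonal blocks, while in the mixed blocks the $\beta$-equation telescopes the required inequality into $h(x+y)\le\tfrac12 h(2y)$ (resp.\ $\tfrac12 h(2x)$), settled by the chord inequality for the convex $h$ on $[2\beta,2]$ together with $\frac{x+y-2\beta}{2(y-\beta)}\le\tfrac12$ when $x\le\beta$ and $h(2y)\ge 0$. I checked the telescoping identities and the chord estimates; they are correct, with equality exactly on the graph of $\Gamma$, so your argument is a complete and more elementary substitute for precisely the step the paper outsources, avoiding all critical-point computations. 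One presentational point: you state the closed form of $f=\sup_{y}\{\phi(\cdot+y)+a(y)\}$ before proving that the supremum is attained at $\Gamma(x)$, which is the very claim under proof; to remove the apparent circularity, define $f$ directly by the two formulas, verify the global inequality with equality on the graph of $\Gamma$, and only then conclude that $f$ is the supremum (hence $c$-convex) and that $\Gamma(x)\in\partial_c f(x)$.
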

\begin{proof}
For the proof we proceed as proposed in \cite{Uckelmann} and \cite{RueschUckel}.
Define the following functions:
\begin{align*}
f_1(x)&=x\phi'(\beta),\\
f_2(x)&=\frac{1}{2}(\phi(2 x)-\phi(2\beta))+\beta \phi'(\beta),\\
\psi^1(\xi)&=\phi(\beta-x+\xi)+x\phi'(\beta)-\phi(\beta),\\
\psi^2(\xi)&=\phi(x+\xi)-\frac{1}{2}\phi(2x)-\frac{1}{2}\phi(2\beta)+\beta\phi'(\beta).
\end{align*}
Furthermore set
$$f(x)=f_1(x)I_{[0,\beta)}(x)+f_2(x) I_{[\beta,1]}(x),$$
and put for $\xi\in[0,1]$:
\begin{align*}
\psi_{\Gamma(x)}(\xi)=
\left\{\begin{array}{cc}
\psi^1(\xi),& x\in[0,\beta),\\
\psi^2(\xi),& x\in[\beta,1].
\end{array}\right.
\end{align*}
Here $\psi_{\Gamma(x)}(\xi)$ plays the role of $\psi_{y,a}(\xi)=c(\xi,y)+a(y)$ with $y=\Gamma(x)$ in (\ref{eq:ubdiffeasy}).
Now the idea, following Theorem \ref{Th:optimality} and (\ref{eq:ubdiffeasy}),
is to show that $y=\Gamma(x)$ is in the $c$-subdifferential of $f(x)$ for all $x\in[0,1]$ which implies optimality
of this particular coupling and optimality of the distribution induced by $(U,\Gamma(U))$ for the transport problem.
For the $c$-convexity of $f$ and the subdifferential property we need to show:
\begin{align*}
\psi_{\Gamma(x)}(x)&=f(x)\quad\forall\,x\in[0,1],\\
\psi_{\Gamma(x)}(\xi)&\leq f(\xi)\quad\forall\,\xi\in[0,1].
\end{align*}
We start with showing that $\psi_{\Gamma(x)}(x)=f(x)$. For $x\in[0,\beta)$ we have that $\Gamma(x)=\beta-x$ and
$$\psi_{\Gamma(x)}(x)=\psi^1(x)=x\phi'(\beta)=f_1(x)=f(x).$$
For $x\in[\beta,1]$ we have $\Gamma(x)=x$ and
$$\psi_{\Gamma(x)}(x)=\psi^2(x)=\frac{1}{2}(\phi(2 x)-\phi(2\beta))+\beta\phi'(\beta)=f_2(x)=f(x).$$
It remains to show $\psi_{\Gamma(x)}(\xi)\leq f(\xi)$ for all $(x,\xi)\in[0,1]\times [0,1]$, which can be achieved by a rather lengthy and carefull analysis,
for the details see \cite{Strauch2015}.
\end{proof}
\begin{remark}
If $\beta>1$ then it can be shown as in the first step of the above proof that $(U,1-U)$ yields the optimal coupling.
Loosely speaking one could say that the concave behaviour dominates the convex one.
\end{remark}
Now we are prepared to answer the sine question. Setting $\phi(z)=\sin(\pi z)$ and $k=1$ we immediately get:
\begin{kor}
For $c(x,y)=\sin(\pi(x+y))$ we have that the distribution of the vector $(U,\Gamma(U))$ for $U\sim\mathcal{U}([0,1])$ with
\begin{align*}
\Gamma(x)=\left\{\begin{array}{cc}
\beta-x,& x\in[0,\beta),\\
x,&x\in[\beta,1],
\end{array}\right.
\end{align*}
and $\beta=0.7541996008265638\approx0.7542$ which solves
\begin{align}\label{eq:1storder}
\sin(2\pi\beta)-\sin(\pi\beta)=\beta\pi\cos(\pi\beta),
\end{align}
is maximizing
$$\int_{[0,1]^2} \sin(\pi(x+y)) d\gamma(x,y)$$
in the set of all bivariate distributions $\gamma$ with uniform marginals, i.e., in the set of all copulas.
\end{kor}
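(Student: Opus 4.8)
The plan is to verify that the choice $\phi(z)=\sin(\pi z)$ satisfies all hypotheses of the preceding Theorem and then simply specialize it. First I would record the derivatives $\phi'(z)=\pi\cos(\pi z)$ and $\phi''(z)=-\pi^2\sin(\pi z)$. Since the sine is smooth, $\phi\in\mathcal{C}^2([0,2])$ trivially. For the required sign structure of $\phi''$ note that $\sin(\pi z)>0$ on $(0,1)$ and $\sin(\pi z)<0$ on $(1,2)$, so that $\phi''<0$ on $(0,1)$ and $\phi''>0$ on $(1,2)$. Thus $\phi$ is strictly concave on $[0,1]$ and strictly convex on $[1,2]$, which is exactly the concave-then-convex shape demanded by the Theorem, with the single inflection point $k=1\in(0,2)$. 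The second derivative also vanishes at the endpoints $z=0$ and $z=2$, but this does not affect the argument: what the proof of the Theorem exploits is strict concavity on $[0,1]$ and strict convexity on $[1,2]$, both of which hold.

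Next I would identify the equation determining $\beta$. Substituting $\phi$ and $\phi'$ into $\phi(2\beta)-\phi(\beta)=\beta\phi'(\beta)$ yields precisely $\sin(2\pi\beta)-\sin(\pi\beta)=\beta\pi\cos(\pi\beta)$, i.e.\ equation \eqref{eq:1storder}. To confirm that a solution with $\beta\in(0,1)$ exists I would study $g(\beta)=\sin(2\pi\beta)-\sin(\pi\beta)-\beta\pi\cos(\pi\beta)$. A short expansion near the origin gives $g(\beta)<0$ for small $\beta>0$, and directly $g(1/2)=-1<0$, whereas $g(1)=-\pi\cos(\pi)=\pi>0$. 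By the intermediate value theorem $g$ has a root in $(1/2,1)$, and a numerical evaluation locates it at $\beta\approx 0.7542$. The crucial point is that this root lies strictly inside $(0,1)$, so we are in the regime treated by the Theorem and not in the exceptional case $\beta>1$ handled by the subsequent Remark.

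With the hypotheses checked and $\beta\in(0,1)$ exhibited, the Theorem applies verbatim: the pair $(U,\Gamma(U))$ with $\Gamma$ of the stated piecewise form is an optimal $c$-coupling of the two uniform marginals. Since the coupling formulation \eqref{eq:coupling} is set up as a maximization of $\erw(c(X_1,X_2))$, this optimality means that the induced copula maximizes $\int_{[0,1]^2}\sin(\pi(x+y))\,d\gamma(x,y)$ over all copulas $\gamma$, which is the claim. All the genuine work sits inside the Theorem itself --- in particular the verification that $\Gamma(x)\in\partial_c f(x)$ for every $x$ via the explicit $c$-convex function $f$ --- so the only obstacle specific to this corollary is the admissibility check for $\beta$, namely confirming both its existence as a solution of \eqref{eq:1storder} and that it indeed satisfies $\beta<1$.
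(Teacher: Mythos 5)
Your proposal is correct and follows the same route as the paper: the paper obtains the corollary simply by setting $\phi(z)=\sin(\pi z)$ and $k=1$ in the preceding theorem. Your additional checks --- the sign pattern of $\phi''$, the harmless vanishing of $\phi''$ at the endpoints, and the intermediate-value argument giving a root $\beta\in(1/2,1)$ of \eqref{eq:1storder} --- are careful verifications of exactly the specialization the paper states as immediate.
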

\begin{remark}
In this situation equation (\ref{eq:1storder}) meets the first order condition when looking at couplings of the form
$(U,\Gamma^{\alpha}(U))$ with 
\begin{align*}
\Gamma^{\alpha}(x)=\left\{
\begin{array}{cc}
\alpha-x,& x\in[0,\alpha),\\
x,& x\in[\alpha,1],
\end{array}\right.
\end{align*}
or explicitly maximizing ($c(x,y)=\sin(\pi(x+y)$)
\begin{align*}
H(\alpha):=\int_0^{\alpha }c(x,\alpha -x) \, dx+\int_{\alpha }^1 c(x,x) \, dx.
\end{align*}
\end{remark}
\section{Approximations}
In this section we are going to introduce some implementable approximation methods for the optimal transport problem.
Since the computational methods are based on the \emph{assignment problem}, see Burkard et al. \cite{BDM}, we
recapitulate it and mention (some) one of the fundamental numerical solution algorithm(s). Some connections between
the particular \emph{copula maximization} problem and the assignment problem are already given in \cite{HoferIaco}.
There the authors showed that for a piecewise constant cost function the maximizing measure is induced by a
shuffle of $M$ whose parameters are linked to the permutation which solves the corresponding assignment problem.\\
\\
The (linear sum) assignment problem from combinatorial optimization is given by a matrix $(c_{ij})_{1\leq i,j\leq n}$
with entries $c_{ij}\in\R$ which represent costs when assigning $j$ to $i$, or when transporting 1 unit of mass from
$i$ to $j$. The goal is to match each row to a different column at minimal cost,
\begin{align*}
\sum_{i=1}^n\sum_{j=1}^n c_{ij}x_{ij}\;\to\;\mbox{minimum}
\end{align*}
under the constraints $\sum_{j=1}^n x_{ij}=1$ for $i\in\{1,\ldots,n\}$, $\sum_{i=1}^n x_{ij}=1$ for $j\in\{1,\ldots,n\}$
and $x_{ij}\in\{0,1\}$ for all $i,\,j\in\{1,\ldots,n\}$. An interesting remark is given in \cite[p. 75]{BDM}, it states
that the problem above is equivalent to its continuous relaxation with $x_{ij}\geq 0$ for all $i,\,j\in\{1,\ldots,n\}$.\\
\\
Notice that if specifying $\mu=\frac{1}{n}\sum_{i=1}^n\delta_{x_i}$, $\nu=\frac{1}{n}\sum_{j=1}^n\delta_{y_j}$ for points $\{x_1,\ldots,x_n\}$,
$\{y_1,\ldots,y_n\}$ in the unit interval and identifying $c(x_i,y_j)=c_{ij}$ one recovers exactly the assignment problem from
the original transport problem. This connection also represents the first step in the proof of the fundamental Theorem 5.20 from \cite{Villani2009}.
\begin{remark}
As mentioned above a standard reference, from the theoretical as well as from the algorithmic point of view, is Burkard et al. \cite{BDM}.
Another reference for so-called quadratic assignment problems, incorporating a different cost structure, is {\c{C}}ela \cite{Cela98}.
In the paper by Beiglb\"ock et al. \cite{BeiglLeoSchach2014} the general duality theory of the transport problem
is motivated by an explicit study of the assignment problem.
\end{remark}

The assignment problem was among the first linear programming problems to
be studied extensively. Given $n$ workers and $n$ jobs, we know, for every job, the salary that should be paid to each worker for him to perform the job. The goal is to find the the best assignment, i.e. each worker is assigned to exactly one job and vice versa in order to minimize the total cost (the sum of the salaries).\\
One of the many algorithms which solves the linear assignment problem is due to Kuhn \cite{Kuhn} and Munkres \cite{Munkres} (also known as the Hungarian method).\\
In fact the algorithm was developed and published by Kuhn \cite{Kuhn}, who gave the name \lq\lq Hungarian method\rq\rq.
Munkres \cite{Munkres} reviewed the algorithm and observed that it is strongly polynomial.\\
The problem is formulated as follows: given $n$ workers and tasks, and an $n\times n$ matrix containing the cost of assigning each worker to a task, find the cost minimizing assignment.

First the problem is written in the following matrix form
\begin{equation*}
\begin{pmatrix}
a_{11} & a_{12} & \dots & a_{1n} \\
a_{21} & a_{22} & \dots & a_{2n}\\
\vdots & \ddots & & \\
a_{n1} & a_{n2} & \dots & a_{nn}
 \end{pmatrix}\ ,
\end{equation*}
where the $a_{i,j}$'s denote the penalties incurred when worker $i$ performs task $j$.\\
The first step of the algorithm consists in subtracting the lowest $a_{i,j}$'s of the $i$th row from each element in that row. This will lead to at least one zero in that row. This procedure is repeated for all rows. We now have a matrix with at least one zero per row. We repeat this procedure for all columns (i.e. we subtract the minimum element in each column from all the elements in that column).\\
Then, we draw lines through the rows and columns so that all the
zero entries of the matrix are covered and the minimum number
of such lines is used. Finally, we check if an optimal assignment is possible. If the minimum number of covering lines is exactly $n$, an optimal assignment of zeros is possible and we are finished. Otherwise, 
if the minimum number of covering lines is less than $n$, an
optimal assignment of zeros is not yet possible and we determine the smallest entry not covered by any line. We again subtract this
entry from each uncovered row, and then add it to each covered
column and we perform again the optimality test.\\

Due to its several applications and to the possible connections with related problems, many researchers got interested in the higher dimensional version of the linear assignment problem, the MAP, where one aims to find tuples of elements from given sets, such that the total cost of the tuples is minimal. While the linear assignment problem is solvable in polynomial time, the MAP is NP-hard (see e.g. \cite{BDM}). Recently, a new approach based on the Cross-Entropy (CE) methods has been developed in \cite{Nguyen_et_Al2014} for solving the MAP. The efficiency of this method is corroborated by several teCsts on large-scale problems.
\subsection{Theoretical basis}
For computational purposes the following fairly general result due to Schachermayer \& Teichmann \cite{SchachTeich} is valuable.
\begin{theorem}[Th. 3 from \cite{SchachTeich}]\label{th:approxST}
Let $c:X\times Y\to\R_{\geq 0}$ be a finitely valued, continuous cost function on Polish spaces $X,\,Y$. Let $\{\pi_n\}_{n\geq 0}$
be an approximating sequence of optimizers associated to weakly converging sequences $\mu_n\to\mu$ and $\nu_n\to\nu$ as $n\to\infty$,
i.e., $\pi_n$ being an optimizer for the transport problem $(c,\mu_n,\nu_n)$. Then there is
a subsequence $\{\pi_{n_k}\}_{k\geq 0}$ converging weakly to a transport plan $\pi$ on $X\times Y$, which optimizes
the Monge-Kantorovich problem for $(\mu,\nu,c)$. Any other converging subsequence of $\{\pi_n\}_{n\geq 0}$ also
converges to an optimizer of the Monge-Kantorovich problem, i.e., the non-empty set of adherence points of $\{\pi_n\}_{n\geq 0}$
is a set of optimizers.
\end{theorem}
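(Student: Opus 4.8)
The plan is to avoid controlling the cost functional by brute force and instead exploit the cyclical–monotonicity characterisation already available as Theorem \ref{th:fundamental}: I would show that the support of \emph{any} weak limit of the $\pi_n$ is itself $c$-cyclically monotone, and then invoke the equivalence (b)$\Rightarrow$(a) of that theorem to deduce optimality. This route is attractive because passing cyclical monotonicity to a limit only involves finite sums of a continuous function, so it sidesteps the uniform-integrability difficulties that a direct gluing/value-function argument would face for an unbounded $c$.

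First I would secure compactness. Since $\mu_n$ and $\nu_n$ converge weakly, both marginal sequences are tight, so for $\veps>0$ one can choose compact $K_X,K_Y$ with $\sup_n\mu_n(X\setminus K_X)<\veps/2$ and $\sup_n\nu_n(Y\setminus K_Y)<\veps/2$; then $\pi_n((K_X\times K_Y)^c)\le\mu_n(X\setminus K_X)+\nu_n(Y\setminus K_Y)<\veps$, so $\{\pi_n\}$ is tight on $X\times Y$. By Prokhorov's theorem some subsequence $\pi_{n_k}$ converges weakly to a measure $\pi$. Since the coordinate projections are continuous, the marginals of $\pi$ are the weak limits of the marginals of $\pi_{n_k}$, namely $\mu$ and $\nu$, so $\pi\in ADM(\mu,\nu)$; this argument also shows the adherence set of $\{\pi_n\}$ is non-empty.

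The heart of the matter is to show $\mathrm{supp}(\pi)$ is $c$-cyclically monotone. Each $\pi_{n_k}$ is optimal, hence by Theorem \ref{th:fundamental} concentrated on a $c$-cyclically monotone set; because $c$ is continuous, the defining inequalities are closed conditions, so the closure of that set — and therefore $\mathrm{supp}(\pi_{n_k})$, which it contains — is again $c$-cyclically monotone. I would then use the following approximation fact: if $z\in\mathrm{supp}(\pi)$, every open ball $B$ around $z$ has $\pi(B)>0$, and Portmanteau gives $\liminf_k\pi_{n_k}(B)\ge\pi(B)>0$, so $\mathrm{supp}(\pi_{n_k})\cap B\neq\emptyset$ for large $k$; a diagonal extraction yields, for any finite family $z_1,\dots,z_N\in\mathrm{supp}(\pi)$, points $z_i^k=(x_i^k,y_i^k)\in\mathrm{supp}(\pi_{n_k})$ with $z_i^k\to z_i$ taken simultaneously from the same $\pi_{n_k}$. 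Writing the cyclical-monotonicity inequality for these points and letting $k\to\infty$, continuity of $c$ passes it to the limit,
\begin{align*}
\sum_{i=1}^N c(x_i,y_i)\le\sum_{i=1}^N c(x_{i+1},y_i),\qquad x_{N+1}=x_1,
\end{align*}
so $\mathrm{supp}(\pi)$ is $c$-cyclically monotone.

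Finally I would apply Theorem \ref{th:fundamental} to the limit problem $(\mu,\nu,c)$: the lower bound $c(x,y)\ge a(x)+b(y)$ holds with $a\equiv b\equiv 0\in L^1$ since $c\ge 0$, and finiteness of \eqref{eq:OpTrans} follows from lower semicontinuity of the cost functional under weak convergence, giving $\int c\,d\pi\le\liminf_k\int c\,d\pi_{n_k}$ — automatically finite in the paper's application, where $X=Y=[0,1]$ is compact and $c$ is bounded. Thus the hypotheses are met and (b)$\Rightarrow$(a) shows $\pi$ is optimal. Since nothing beyond weak convergence of the subsequence was used, the identical reasoning applies to every convergent subsequence, so each adherence point is admissible with $c$-cyclically monotone support and hence optimal, which is the final assertion. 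The main obstacle I expect is the bookkeeping in the support-approximation step — ensuring the vertices of the $N$-cycle can be approximated from a \emph{common} $\pi_{n_k}$ — together with verifying the finiteness hypothesis of Theorem \ref{th:fundamental} in the full non-compact generality of the statement, where one must also know the approximating optimal values do not blow up.
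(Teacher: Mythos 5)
You should first be aware that the paper does not prove this statement at all: it is imported verbatim as Theorem 3 of Schachermayer \& Teichmann \cite{SchachTeich}. The only internal comparator is therefore the paper's proof of the companion stability result, Theorem \ref{th:approxHI} (fixed uniform marginals, varying costs, rather than fixed cost, varying marginals). Your argument has exactly the skeleton of that proof, and of the original proof in \cite{SchachTeich}: tightness of $\{\pi_n\}$ inherited from the marginals plus Prokhorov; transfer of $c$-cyclical monotonicity to the weak limit; optimality of the limit via the implication (b)$\Rightarrow$(a) of Theorem \ref{th:fundamental}. The only methodological difference lies in the transfer step: you approximate an $N$-cycle in $\mathrm{supp}(\pi)$ by $N$-cycles in $\mathrm{supp}(\pi_{n_k})$ (Portmanteau on small balls plus a diagonal extraction, which is precisely the device of \cite{SchachTeich}), whereas the proof of Theorem \ref{th:approxHI} works with the $N$-fold product measures $\gamma^{n,\oplus N}$ and the closed sets $\mathcal{S}_{\veps}(N)$, following Villani's proof of his Theorem 5.20. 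Both devices are sound; the product-measure version avoids the support bookkeeping you were worried about, while yours avoids introducing product measures, and your cycle-approximation step is carried out correctly.

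The genuine issue is the finiteness you flag at the end, and it splits into two parts of different severity. For the limit problem the fix is trivial and you should make it explicit: if the optimal cost of $(c,\mu,\nu)$ is $+\infty$, then every plan in $ADM(\mu,\nu)$ is optimal and there is nothing to prove; otherwise Theorem \ref{th:fundamental} applies directly. In particular you do not need the $\liminf$ argument, which indeed presupposes that the prelimit optimal values stay bounded. For the prelimit problems, however, finiteness of the optimal cost of $(c,\mu_n,\nu_n)$ --- equivalently, that the $\pi_n$ are optimizers \emph{of finite cost} --- is indispensable for your very first step (``optimal $\Rightarrow$ concentrated on a $c$-cyclically monotone set''), and without it the statement as transcribed in the paper is actually false. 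Take $X=Y=\R$, $c(x,y)=e^{|x-y|}$, $\nu_n=\mathcal{U}[0,1]$ and $\mu_n=(1-\tfrac1n)\,\mathcal{U}[0,1]+\tfrac1n\rho$, where $\rho$ is supported on $[2,\infty)$ with $\int e^x\,\rho(dx)=\infty$: every coupling of $(\mu_n,\nu_n)$ has infinite cost, hence is vacuously an ``optimizer''; choosing for $\pi_n$ the antitone coupling on the bulk plus $\tfrac1n(\rho\otimes\mathcal{U}[0,1])$, the $\pi_n$ converge weakly to the antitone coupling of two uniform laws, which is not optimal for this $c$. So either one reads ``optimizer'' as ``finite-cost optimizer'' (which is how \cite{SchachTeich} is to be understood; their proof rests on their own sufficiency theorems for finitely valued costs rather than on Villani's Theorem 5.10 with its finiteness hypothesis), or one adds that hypothesis explicitly. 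In the regime in which the paper actually uses the theorem ($X=Y=[0,1]$, $c$ continuous and hence bounded) all of these quantities are automatically finite and your proof is complete.
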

An approximation result suited for the uniform marginals situation is Theorem 2.2 from \cite{HoferIaco}, here we can complement it
by proving that a (sub-)sequence of the discrete optimizers converges to an optimizer of the limiting continuous problem.
\begin{theorem}\label{th:approxHI}
Let $c$ be a continuous function on $[0,1]^2$, let the sets $I^n_{i,j}$ be given as
\begin{equation*}
I^n_{i,j}=\left[\frac{i-1}{2^n},\frac{i}{2^n}\right[\times\left[\frac{j-1}{2^n},\frac{j}{2^n}\right[\;\text{for}\;i,j=1,\ldots,2^n,
\end{equation*}
for every $n>1$ and define the functions $\underline{c}_n,\,\overline{c}_n$ as
\begin{align}
\underline{c}_n(x,y) &= \min_{(x,y) \in I^n_{i,j}} c\left( x, y \right), \quad \text{ for all } (x,y) \in I^n_{i,j},\notag\\
\overline{c}_n(x,y) &= \max_{(x,y) \in I^n_{i,j}} c\left( x, y \right), \quad \text{ for all } (x,y) \in I^n_{i,j}.\label{mini}
\end{align}
Furthermore, let $\underline{\gamma}^n_{\max},\,\overline{\gamma}^n_{\max}$ be maximizing measures for cost functions $\underline{c}_n$
and $\overline{c}_n$ respectively. Then
\begin{align}
\lim_{n \rightarrow \infty} \int_{[0,1[^2} \underline{c}_n(x,y) \underline{\gamma}^n_{\max}(dx,dy)
&= \lim_{n \rightarrow \infty} \int_{[0,1[^2} \overline{c}_n(x,y) \overline{\gamma}^n_{\max}(dx,dy) \notag\\
&= \sup_{\gamma \in \mathcal{C}} \int_{[0,1[^2} c(x,y) \gamma(dx,dy).\label{conv}
\end{align}
Furthermore the sequence of maximizers converges, at least along some subsequence, to a maximizer of the original problem
$\int_{[0,1]^2}c\gamma(dx,dy)$.
\end{theorem}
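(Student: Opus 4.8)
The plan is to prove the two equalities in (\ref{conv}) by a sandwiching argument, and then obtain the convergence of maximizers via the abstract stability result of Theorem \ref{th:approxST}. Throughout I write $\mathcal{C}$ for the set of copulas (equivalently, doubly stochastic measures), which is weakly compact since it is a closed subset of the probability measures on the compact space $[0,1]^2$.

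First I would establish the elementary pointwise inequalities $\underline{c}_n \le c \le \overline{c}_n$ on $[0,1]^2$, which hold by the very definition (\ref{mini}) of the step functions. Since every copula $\gamma \in \mathcal{C}$ integrates these in the obvious monotone order, taking suprema over $\gamma$ gives
\begin{equation*}
\int_{[0,1[^2} \underline{c}_n\, \underline{\gamma}^n_{\max}(dx,dy) \;\le\; \sup_{\gamma \in \mathcal{C}} \int_{[0,1[^2} c\, \gamma(dx,dy) \;\le\; \int_{[0,1[^2} \overline{c}_n\, \overline{\gamma}^n_{\max}(dx,dy),
\end{equation*}
where I use that $\underline{\gamma}^n_{\max}$ and $\overline{\gamma}^n_{\max}$ are the maximizing measures for $\underline{c}_n$ and $\overline{c}_n$. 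So the whole statement reduces to controlling the gap between the outer two quantities. The key estimate is that $0 \le \overline{c}_n - \underline{c}_n \le \omega_c(\sqrt{2}\cdot 2^{-n})$ uniformly, where $\omega_c$ is the modulus of continuity of $c$; here I crucially use that $c$ is continuous on the compact set $[0,1]^2$, hence uniformly continuous, and that each cell $I^n_{i,j}$ has diameter $\sqrt{2}\cdot 2^{-n} \to 0$.

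With this in hand, the central step is to compare the two outer integrals directly. Because $\overline{\gamma}^n_{\max}$ is itself a copula, I can bound
\begin{equation*}
\int \overline{c}_n\, \overline{\gamma}^n_{\max}(dx,dy) \;\le\; \int \underline{c}_n\, \overline{\gamma}^n_{\max}(dx,dy) + \omega_c(\sqrt{2}\cdot 2^{-n}) \;\le\; \int \underline{c}_n\, \underline{\gamma}^n_{\max}(dx,dy) + \omega_c(\sqrt{2}\cdot 2^{-n}),
\end{equation*}
the last inequality using optimality of $\underline{\gamma}^n_{\max}$ for $\underline{c}_n$. Combined with the reversed chain of inequalities above, this squeezes all three quantities to within $\omega_c(\sqrt{2}\cdot 2^{-n})$ of each other; letting $n \to \infty$ yields both equalities in (\ref{conv}) simultaneously. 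I expect this squeeze to be the technically cleanest part once the uniform-continuity bound is recorded.

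For the final sentence — convergence of maximizers — the plan is to invoke Theorem \ref{th:approxST}. The subtlety, and what I anticipate to be the main obstacle, is that Theorem \ref{th:approxST} is phrased for a fixed cost $c$ with varying marginals $\mu_n \to \mu$, whereas here the marginals are held fixed (uniform) while the \emph{cost} $\overline{c}_n \to c$ varies. I would therefore either re-run the compactness argument directly or appeal to the following reasoning: by weak compactness of $\mathcal{C}$, the sequence $\overline{\gamma}^n_{\max}$ has a subsequence converging weakly to some $\gamma^\ast \in \mathcal{C}$. On that subsequence,
\begin{equation*}
\left| \int \overline{c}_n\, \overline{\gamma}^n_{\max} - \int c\, \gamma^\ast \right| \le \left| \int (\overline{c}_n - c)\, \overline{\gamma}^n_{\max} \right| + \left| \int c\, \overline{\gamma}^n_{\max} - \int c\, \gamma^\ast \right|,
\end{equation*}
where the first term is bounded by $\omega_c(\sqrt{2}\cdot 2^{-n}) \to 0$ by uniform convergence $\overline{c}_n \to c$, and the second term tends to $0$ by weak convergence together with the continuity and boundedness of the fixed test function $c$. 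Hence $\int c\, \gamma^\ast = \sup_{\gamma \in \mathcal{C}} \int c\, \gamma$ by (\ref{conv}), so $\gamma^\ast$ is a maximizer of the original problem, completing the proof.
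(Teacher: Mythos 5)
Your proof is correct, and for the second half of the statement it takes a genuinely different route from the paper's. For the equalities \eqref{conv} the paper simply cites \cite{HoferIaco}, where this part was already established, whereas you give a self-contained squeeze argument: the pointwise bounds $\underline{c}_n \le c \le \overline{c}_n$, the uniform gap $\overline{c}_n - \underline{c}_n \le \omega_c\bigl(\sqrt{2}\cdot 2^{-n}\bigr)$, and the optimality of $\underline{\gamma}^n_{\max}$ (to pass from $\int \underline{c}_n\,d\overline{\gamma}^n_{\max}$ to $\int \underline{c}_n\,d\underline{\gamma}^n_{\max}$) pin all three quantities within $\omega_c\bigl(\sqrt{2}\cdot 2^{-n}\bigr)$ of each other; this is valid and makes the result self-contained. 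The more substantial divergence is in the convergence of maximizers. You correctly observe that Theorem \ref{th:approxST} does not apply directly, since it varies the marginals with a fixed cost while here the cost varies with fixed uniform marginals; the paper in fact does not invoke it either. But where you argue by soft analysis --- extract a weak limit $\gamma^\ast \in \mathcal{C}$ by compactness, split $\int \overline{c}_n\,d\overline{\gamma}^n_{\max} - \int c\,d\gamma^\ast$ into a uniform-convergence term and a weak-convergence term, and certify optimality of $\gamma^\ast$ from the value identity \eqref{conv} --- the paper instead follows Villani's proof of Theorem 5.20: it shows that the products of the approximating optimizers concentrate on the approximate cyclical-monotonicity sets $\mathcal{S}_\varepsilon(N)$, uses closedness of these sets and the portmanteau theorem to transfer this concentration to the limit measure, lets $\varepsilon \to 0$ to get concentration on a $c$-cyclically monotone set, and then invokes the characterization of optimality in Theorem \ref{th:fundamental}(b). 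Your argument is shorter and more elementary, but it leans on having already proved the value convergence \eqref{conv}; the paper's cyclical-monotonicity route is structural and establishes optimality of the limit without appealing to the convergence of optimal values, which is why it is the standard stability argument in optimal transport. Both proofs are complete for the statement at hand (note only that your argument, written for $\overline{\gamma}^n_{\max}$, applies verbatim to $\underline{\gamma}^n_{\max}$, which is the sequence the paper treats).
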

\begin{proof}
The first statement (\ref{conv}) is already shown in \cite{HoferIaco}. For the remaining part we can proceed
in the spirit of Villani's proof of Theorem 5.20 from \cite{Villani2009}, notice there a sequence of continuous
functions $c_n$ is considered. We will show the proof for the lower approximation via $\underline{c}_n$.\\
At first observe that $\underline{\gamma}^n$ converges (at least along some subsequence) to some measure $\gamma^*$ with uniform marginals, cf.
\cite[Thm. 5.21]{Kallenberg}.\\
From above we know that $\underline{\gamma}^n$ is concentrated on a $\underline{c}_n$-cyclically monotone set with the consequence that
for some $N\in\N$ the $N$-fold product measure $\underline{\gamma}^{n,\oplus N}$ is concentrated on the set $\mathcal{S}_n(N)$ of points
$(x_1,y_1),\ldots,(x_N,y_N)$ for which
\begin{align*}
\sum_{j=1}^N \underline{c}_n(x_j,y_j)\geq \sum_{j=1}^N\underline{c}_n(x_{j+1},y_j),
\end{align*}
with $x_{N+1}=x_1$. Now fix some $\varepsilon>0$ and choose $n$ large enough, such that $\underline{\gamma}^{n,\oplus N}$ is concentrated on the set
$\mathcal{S}_{\varepsilon}(N)$ of points with
\begin{align*}
\sum_{j=1}^N c(x_j,y_j)\geq \sum_{j=1}^N c(x_{j+1},y_j)-\varepsilon.
\end{align*}
Since $c$ is continuous we have that $\mathcal{S}_{\varepsilon}(N)$ is a closed set. This (using indicator functions in the weak
convergence characterization)
implies that also the limiting measure $\gamma^{*,\oplus N}$ is concentrated on $\mathcal{S}_{\varepsilon}(N)$ for all $\varepsilon>0$.
We can let $\varepsilon\to 0$ and derive that $\gamma^{*,\oplus N}$ is concentrated on a set of points with
\begin{align*}
\sum_{j=1}^N c(x_j,y_j)\geq \sum_{j=1}^N c(x_{j+1},y_j)
\end{align*}
and therefore is concentrated on a $c$-cyclically monotone set. Since the costs are bounded
we deduce from Theorem \ref{th:fundamental}(b) that $\gamma^*$ is optimal.
\end{proof}
\begin{remark}
Theorem \ref{th:approxST} can be used when approximating $\mu$ and $\nu$ by empirical distributions.
Let $X_1,\,X_2,\ldots$ be i.i.d. random variables with distribution $\mu$ and
$Y_1,\,Y_2,\ldots$ be i.i.d. random variables with distribution $\nu$. Then the empirical distributions defined by
$\hat{\mu}_n(-\infty,x]=\frac{1}{n}\sum_{k=1}^n I_{\{X_k\leq x\}}$ and $\hat{\nu}_n(-\infty,x]=\frac{1}{n}\sum_{k=1}^n I_{\{Y_k\leq x\}}$
converge weakly to $\mu$ and $\nu$, see \cite[Prop. 4.24]{Kallenberg}. In this situation one can solve assignment
problems along realizations of the random sequences $\{X_k\}$ and $\{Y_k\}$.\\
The spirit of Theorem \ref{th:approxHI} is a little different. There the marginal distributions are fixed to be uniform,
whereas the cost function is approximated by piecewise constant functions on a deterministically chosen grid. This particular situation
is linked to a solution of the assignment problem via Theorem 2.1 of \cite{HoferIaco}.
\end{remark}

\subsection{Numerical examples}

An explicit implementation of the Hungarian algorithm applied to the cost function $c(x,y)=\sin(\pi(x+y))$ can be found in \cite{HoferIaco}. The authors provide also a numerical solution to a problem in financial mathematics, namely the First-to-Default (FTD) swap. This is a  contract between an insurance buyer and an insurance seller. The first one makes periodic premium payments, called spreads, until the maturity of the contract or the default, whichever occurs first. In exchange the second one compensates the loss caused by the default at the time of default. In \cite{HoferIaco} an approximation of the value of the maximal spread is provided.\\ Of course other applications are possible and interesting, but this goes beyond the purposes of the present paper.\\
Our aim is to consider some cost functions $c$ involving the sine function as in \cite{HoferIaco} and to show how the support of the copula where the maximum is attained can vary considerably. More precisely, we consider
\begin{equation}\label{num}
\limsup_{N \rightarrow \infty} \frac{1}{N} \sum_{n = 1}^N c (x_n + y_n)\ ,
\end{equation}
with $c(x,y)=\sin(2\pi x)\sin(2\pi y)$, $c(x,y)=\sin(2\pi x)\cos(2\pi y)$ and $c(x,y)=\sin(2\pi/x)\cos(2\pi y)$. In particular, the numerical costs for the transport map in the first case is 0.5, which is the same as when using the identity map (explicitly computed). Let us remark that obviously there is no unique solution.\\
The numerical results are illustrated in Table \ref{tab:numerics}.

\begin{table}[!ht]
\centering
\begin{tabular}{c|c|c|c}
\backslashbox{$n$}{$c$}& $\sin(\pi x)\sin(\pi y)$ & $\sin(\pi x)\cos(\pi y)$ & $\sin(\pi/x)\cos(\pi y)$\\
\hline
2 & 0.5 & 0.1768 & 0.4612\\
3 & 0.5 & 0.2039 & 0.3402\\
4 & 0.5 & 0.2102 & 0.5067\\
5 & 0.5 & 0.2117 & 0.4012\\
6 & 0.5 & 0.2121 & 0.4580\\
7 & 0.5 & 0.2122 & 0.4400
\end{tabular}
\caption{Upper bounds for the $\limsup$ in (\ref{num}).}\label{tab:numerics}
\end{table}

\begin{figure}[h!]
 \centering
 \includegraphics[scale=0.4]{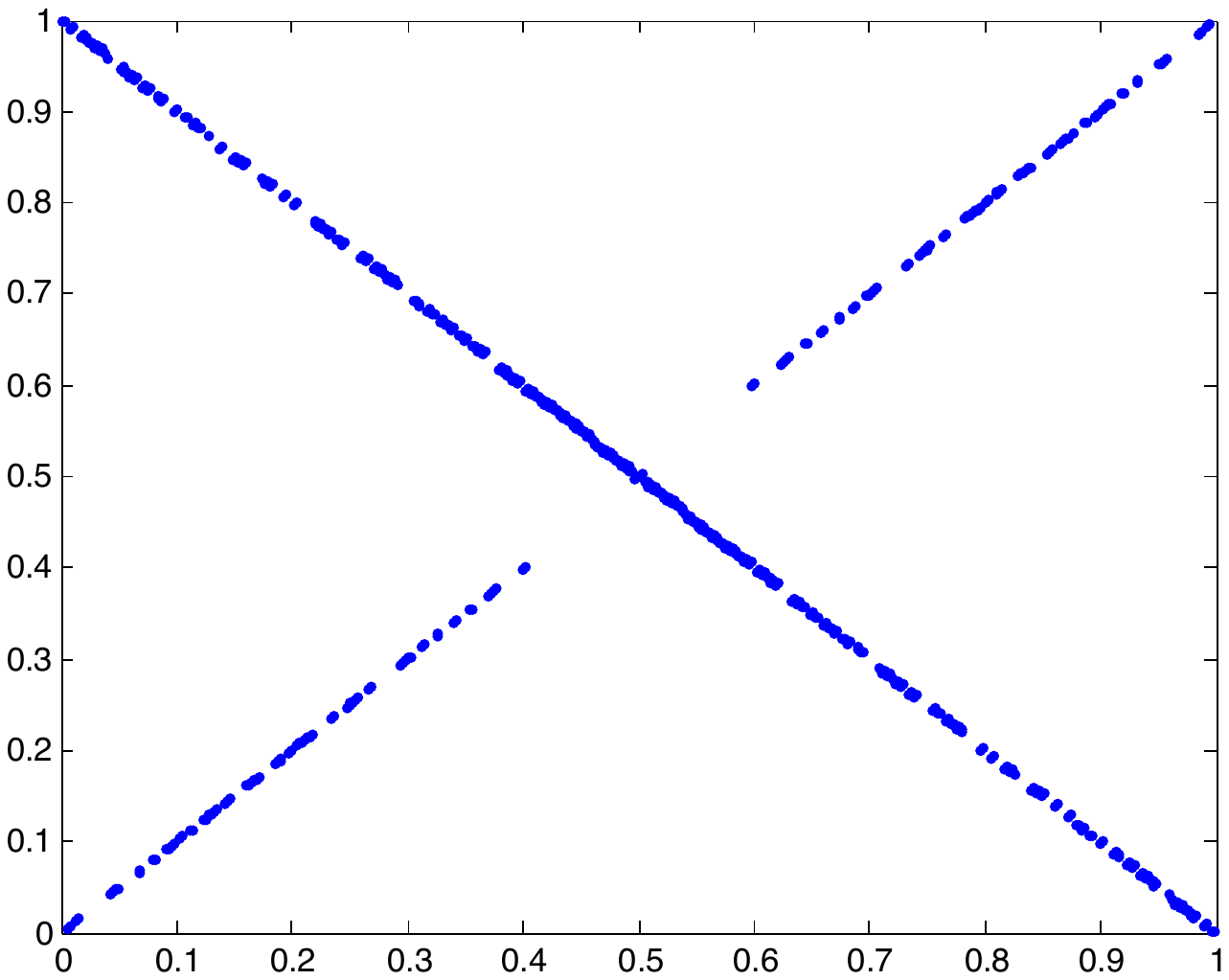}
\caption{Support of copula which attains upper bound for $c(x,y)=\sin(\pi x)\sin(\pi y)$ and $n=10$}
\label{fig:sinsin}
\end{figure}

\begin{figure}[h!]
\begin{center}
\centering\includegraphics[scale=0.25]{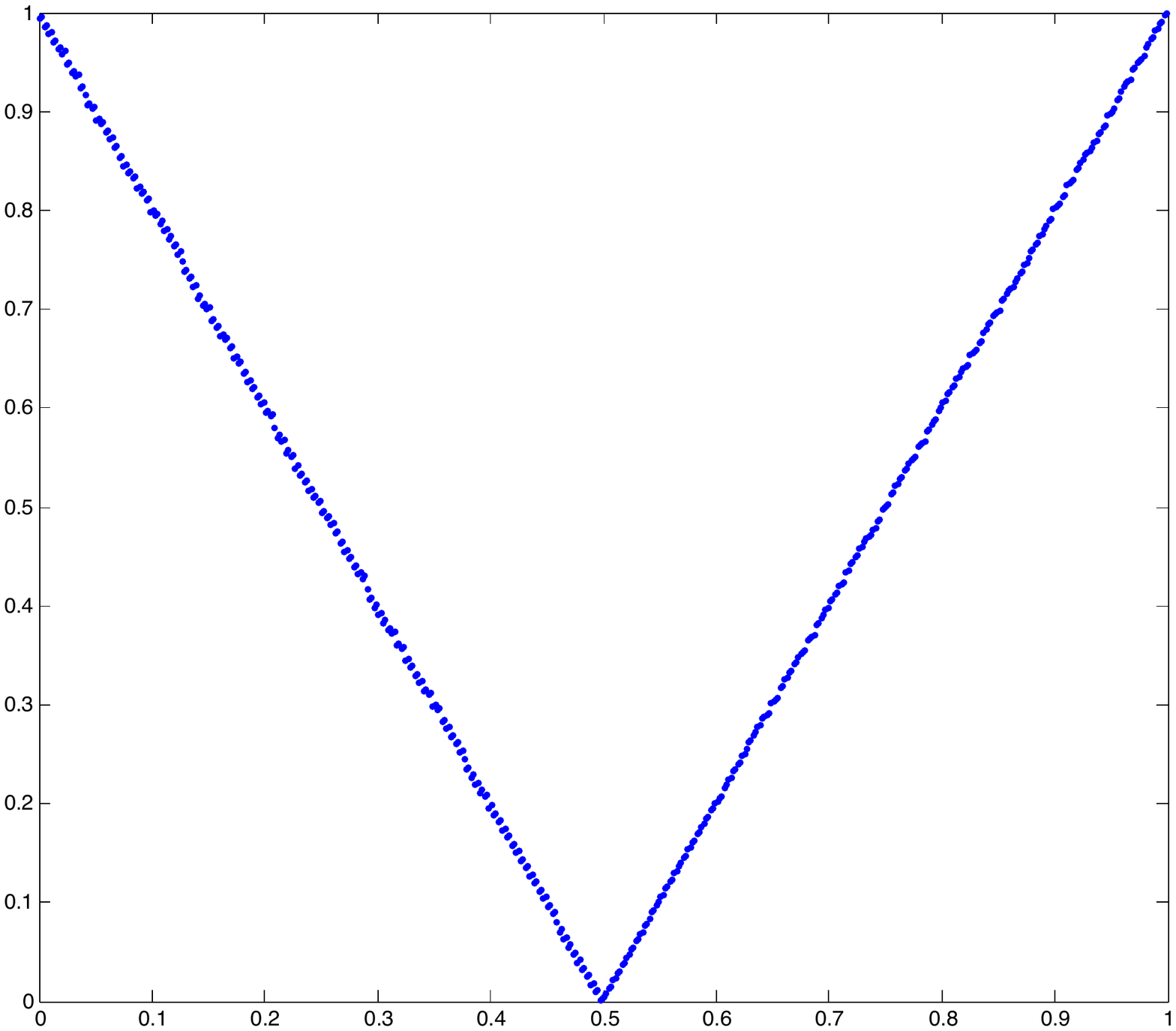}
\caption{Support of copula which attains upper bound for $c(x,y)=\sin(\pi x)\cos(\pi y)$ and $n=10$}\label{fig:sincos}
\end{center}
\end{figure}

\begin{figure}[h!]
\begin{center}
\includegraphics[scale=0.42]{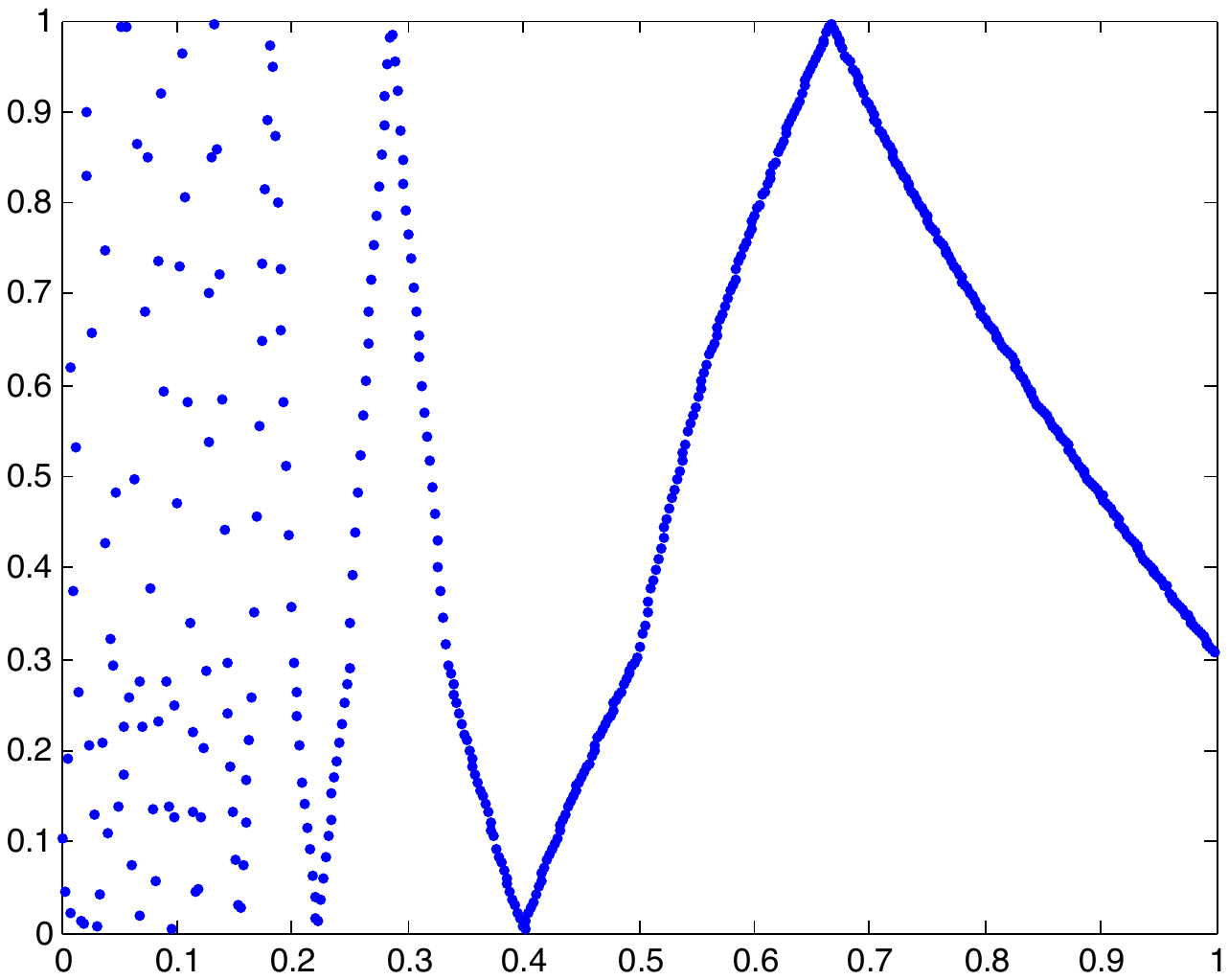}
\caption{Support of copula which attains upper bound for $c(x,y)=\sin(\pi/x)\cos(\pi y)$ and $n=10$}\label{fig:sinchaotic}
\end{center}
\end{figure}

{\footnotesize
\bibliographystyle{abbrv}
\bibliography{itt_bib}}
\Addresses
\end{document}